\newtheorem{theorem}{Theorem}[section]
\newtheorem{corollary}[theorem]{Corollary}
\newtheorem{proposition}[theorem]{Proposition}
\newtheorem{remark}[theorem]{Remark}
\numberwithin{equation}{section}
\newcommand{\beq}{\begin{equation}}
\newcommand{\eeq}{\end{equation}}
\newcommand{\TT}{\mathbb{T}}
\newcommand{\ZZ}{\mathbb{Z}}
\newcommand{\CC}{\mathbb{C}}
\newcommand\bbT{\mathbb T}
\DeclareMathOperator{\Hom}{Hom}
\DeclareMathOperator{\Tr}{Tr}
\newcommand{\Z}{\mathbb{Z}}
\newcommand{\sU}{{\sf U}}
\renewcommand{\cL}{\mathcal{L}}
\newcommand{\rv}{\mathrm{v}}
\newcommand{\rf}{\mathrm{f}}
\newcommand{\gA}{\mathcal{A}}
\newcommand{\gB}{\mathcal{B}}
\newcommand{\LQ}{L^\mathcal{Q}}
\newcommand{\LDQ}{L^\mathcal{DQ}}
\newcommand{\rmhol}{\mathrm{hol}}
\newcommand{\kappak}{\kappa\!\left(\underline A, \underline {\widehat A}\right)}
\newcommand{\bu}{\bullet}
\newcommand{\bfv}{\mathbf{v}}
\newcommand {\be}{\begin{equation}}
\newcommand {\ee}{\end{equation}}
\newcommand{\h}{\begin{eqnarray*}}
\newcommand{\e}{\end{eqnarray*}}
\begin{document}


\title
{T-duality, vertical holonomy line bundles and loop Hori formulae}


\author{Fei Han}
\address{Department of Mathematics,
National University of Singapore, Singapore 119076}
\email{mathanf@nus.edu.sg}

 \author{Varghese Mathai}
\address{School of Mathematical Sciences,
University of Adelaide, Adelaide 5005, Australia}
\email{mathai.varghese@adelaide.edu.au}

\subjclass[2010]{Primary 55N91, Secondary 58D15, 58A12, 81T30, 55N20}
\keywords{}
\date{}

\maketitle

\begin{abstract}

This paper is a step towards realizing T-duality and Hori formulae for loop spaces.
Here we prove T-duality and Hori formulae for winding q-loop spaces, 
which are infinite dimensional subspaces of loop spaces.

\end{abstract}

\tableofcontents

\section*{Introduction}
T-duality is a particular equivalence of  type II string theories that is closely related to mirror symmetry \cite{Hori}.
Our long term aim is to prove T-duality and Hori formulae for loop spaces, since T-duality and the Hori
formulae for spacetime should be a shadow of T-duality and Hori formulae for loop space of spacetime.
More precisely, let spacetime $Z$
be a principal $\bbT$-bundle over $X$. Assume that the spacetime $Z$ is endowed with a flux $H$ which is
a representative in the
degree 3 Deligne cohomology of $Z$. Then the T-dual  is a principal
$\widehat \bbT$-bundle over $X$, denoted by $\widehat Z$, with T-dual flux $\widehat H$ on $\widehat Z$. 
Now loop these to get an $L\bbT$ bundle $LZ$ over $LX$, and an $L\widehat\bbT$ bundle $L\widehat Z$ over $LX$.
Averaging $H$ along loops in $Z$,  one obtains the extension of $H$ from $Z$, the space of constant loops, to $LZ$, denoted by $\underline H$ and similarly
let $\underline {\widehat H}$ denote the extension of $ \widehat H$ from the constant loops to $L\widehat Z$.
Then $(LZ, \underline H)$ and $(L\widehat Z, \underline {\widehat H})$ are T-dual, is the {\em loops pace T-dual conjecture}. Let $\Omega(LZ, \cL)^{S^1}$ denote the space of $S^1$-invariant differential forms on the loop space $LZ$ with values in the holonomy 
line bundle $\cL$ arising from the gerbe $H$. As explained in \cite{HM15}, it has a differential
$D_H=\nabla^{\cL} - \iota_K + \underline H$, which is an equivariantly flat superconnection (cf. \cite{MQ}), where $\nabla^{\cL} $
is the connection on $\cL$ arising from the gerbe with connection $H$ and $K$ is the rotation vector field on $LZ$.
Then the putative Hori formula gives a degree shifting isomorphism and chain map between the complexes (with $u$ being a degree 2 indeterminate),
$$
(\Omega^\bullet(LZ, \cL)^{S^1}[[u, u^{-1}]], \nabla^{\cL} - u\iota_K +u^{-1} \underline H)\quad \text{and}\quad  (\Omega^\bullet(L\widehat Z, \widehat\cL)^{S^1}[[u, u^{-1}]], \nabla^{\widehat \cL} - u\iota_{\widehat K} +u^{-1} \underline {\widehat H})
$$
which is the more precise form of our loop space T-dual conjecture. 

We call the cohomology of the complex 
$$(\Omega^\bullet(LZ, \cL)^{S^1}[[u, u^{-1}]], \nabla^{\cL} - u\iota_K +u^{-1} \underline H)$$
the {\bf completed exotic twisted $S^1$-equivariant cohomology} of $LZ$. In \cite{HM15}, we proved that it is localized to the twisted cohomology $H^\bullet(Z, H)[[u, u^{-1}]]$ of $Z$, the constant loop space. Twisted cohomology first appeared in string theory in the work of Rohm-Witten \cite{RW} and was later studied in detail in \cite{BCMMS,MS}. Therefore the T-duality for loop spaces holds on the level of completed exotic twisted $S^1$-equivariant cohomology after being localized to constant loops. As a result we did not get a loop space version of Hori formula in that paper.

In \cite{HM18}, motivated by the vertical loop spaces of circle bundles and their homotopy equivalent components, we were able to prove T-duality for exotic differential forms on spacetime (see the brief summary in Theorem \ref{main-exotic1} and Theorem \ref{main-exotic2}). The results in \cite{HM18} generalized the T-duality results of \cite{BEM04a, BEM04b} on invariant differential forms. See also \cite{BS} for 
a topological approach to some of the results in  \cite{BEM04a, BEM04b}. The Hori formulae in \cite{HM18} is still on the spacetimes themselves rather than on their loop spaces. 

In \cite{HM22}, we studied the T-duality and Hori formulae for small double loop spaces. Differential forms on 
small double loop spaces can be represented by differerential form valued Jacobi forms on spacetimes, and we obtain the desired
result on these forms as well as proving modularity results in the presence of twisted string structures.

\cite{HM15, HM18} both represent our attempts towards the loop space perspectives of T-duality. Although they are rooted in the study of loop spaces, the results were all stated in terms of the data on $Z$ and $\hat Z$, the constant loop spaces, which are finite dimensional. In this paper, we will prove T-duality results in infinite dimension for the first time by constructing the loop Hori formulae for the {\em winding q-loop spaces} (see \eqref{q-loopspace}), which are infinite dimensional subspaces of loop spaces. 

More precisely, let $(Z, A, H)$ and $(\widehat Z, \widehat  A, \widehat  H)$ be a T-dual pair, with $Z, \widehat Z$ being circle bundles over a same base $X$, $A$ being a connection and $H$ being a flux on $Z$, and $\widehat A, \widehat H$ being the dual data on $\widehat Z$.  Denote by $\LDQ X$ the space of loops in $X$ on which the holonomies of $Z$ and $\widehat Z$ are trivial. Generically this is an infinitely dimensional subspace of the loop space $LX$. Denote by $\LDQ Z$ the space of loops $x(s)$ in $Z$ such that their projection onto $X$  lie in $\LDQ X$ and the winding $A(\dot x(s))\in \ZZ, \forall s\in [0, 1]$. We call $\LDQ Z$ the {\bf winding quantized loop space}, or {\bf winding q-loop space} in short.  It decomposes according to the integer values $A(\dot x(s))$ as $\LDQ Z=\coprod_{m\in \Z} \LDQ_mZ$. Dually we have $\LDQ \widehat Z=\coprod_{n\in \Z} \LDQ_n\widehat Z$. We discover that on the winding q-loop space $\LDQ Z$, there is a {\bf vertical holonomy line bundle} with connection $(\cL_H^\rv, \nabla^{\cL^\rv_H})$ arising from the flux $H$, and the superconnetion $\nabla^{\cL^\rv_H}-\iota_{K^\rv}+\underline H$ is equivariantly flat, where $K^\rv$ is the vertical projection of $K$. This allows us to have a complex for each $k\in \ZZ$,
$$(\gA^{\rv, \bullet}(\LDQ_kZ), \nabla^{\cL^\rv_H}-u\iota_{K^\rv}+u^{-1}\underline H),$$
where $\gA^{\rv, \bullet}(\LDQ_kZ):=\Omega^\bullet(\LDQ_kZ,\cL^\rv_H)^\TT[[u, u^{-1}]]$. On the other hand, on $\LDQ_kZ$, there are also complexes arising from momentums,
$$(\gB_{-j}^{\bullet}(\LDQ_k Z), d+u^{-1}\underline{H}),$$
where $\gB_{-j}^{\bullet}(\LDQ_k Z):=\{\omega\in \Omega^{\bullet}(\LDQ_kZ)[[u, u^{-1}]]|\, L_{\bfv}\omega=-j\omega \},  j\in \ZZ$ and $\bfv$ is the vertical vector field along loops in $Z$ arising from the $\TT$-action $Z$. One has dual complexes on the dual side $\LDQ \widehat Z$. We will show that there are loop Hori maps $\tau_{m,n}, \sigma_{m, n}, \widehat \sigma_{n, m},  \widehat \tau_{n, m}$ 
\[ 
\begin{tikzcd}
\gB_{-n}^{\bullet}(\LDQ_m Z)\oplus\gA^{\rv, \bullet}(\LDQ_mZ)\ar[rr, shift left]{rr}{\tau_{m, n}\oplus \sigma_{m, n}} & & \gA^{\rv, \bullet+1}(\LDQ_n\widehat Z)\oplus\gB_{-m}^{\bullet+1}(\LDQ_n \widehat Z)
\ar[ll, shift left]{ll}{\widehat \sigma_{n, m}\oplus \widehat \tau_{n, m}}\end{tikzcd}
\] 
such that 
$$\widehat{\sigma}_{n, m}\circ\tau_{m,n}=-\mathrm{Id}, \  \tau_{m,n}\circ \widehat{\sigma}_{n, m}=-\mathrm{Id} $$
and 
$$\sigma_{m, n}\circ\widehat\tau_{n,m}=-\mathrm{Id}, \  \widehat\tau_{n,m}\circ \sigma_{m, n}=-\mathrm{Id}. $$

We are able to obtain the above loop Hori formulae by observing that $\LDQ_mZ$ and $\LDQ_n\widehat Z$ are principal circle bundles over $ \LDQ X$ and constructing the fiber product, $\cL(m , n)$, which we call
 {\bf $(m, n)$-loop correspondence space} \begin{equation*} \label{eqn:loopcorrespondence}
\xymatrix @=6pc @ur { (\LDQ_m Z, [\underline H])\ar[d]_{L\pi} &
(\cL(m, n), [\underline H]=[\underline{\widehat H}])\ar[d]_{\widehat p_n} \ar[l]^{p_m} \\ \LDQ X & (\LDQ_n \widehat Z, [\underline{\widehat H}])\ar[l]^{L\widehat \pi}}
\end{equation*}
and then use the tool {\bf twisted integration along the fiber} introduced in \cite{HM18}. 
This picture gets us closer to our ultimate aim of T-duality for loop spaces.

The paper is organized as follows. In Section \ref{ReviewT}, we give a brief review of T-duality. In Section \ref{loopofcirclebundle}, we first review the theory of exotic twisted equivariant cohomology on loop spaces introduced in \cite{HM15} and then specialize to loop spaces of princical bundles by introducing the various construction including the winding q-loop spaces, the $(m, n)$-loop correspondence spaces, the vertical holonomy line bundles. Then in Section \ref{secloopHori}, we construct the loop Hori formulae after reviewing the twisted integration along the fiber. In the bulk of this section, we also discuss the relation of the loop Hori formulae to the exotic Hori formulae introduced in \cite{HM18}. 

\bigskip

\noindent{\em Acknowledgements.} Fei Han was partially supported by the grant AcRF R-146-000-218-112 from National University of Singapore. Varghese Mathai was supported by funding from the Australian Research Council, through the Australian Laureate Fellowship FL170100020.

\bigskip

\section{Review of T-duality} \label{ReviewT}

In \cite{BEM04a, BEM04b}, spacetime $Z$ was compactified in one direction.
More precisely, $Z$
is a principal $\bbT$-bundle over $X$

\begin{equation}\label{eqn:MVBx}
\begin{CD}
\bbT @>>> Z \\
&& @V\pi VV \\
&& X \end{CD}
\end{equation}
 classified up to isomorphism by its first Chern class
{ $c_1(Z)\in H^2(X,\ZZ)$}. Assume that the spacetime $Z$ is endowed with an $H$-flux which is
a representative in the
degree 3 Deligne cohomology of $Z$, that is
$H\in\Omega^3(Z)$ with integral periods (for simplicity, we drop factors of $\frac{1}{2\pi i}$),
together with
the following data. Consider a local trivialization $U_\alpha \times \TT$ of $Z\to X$, where
$\{U_\alpha\}$ is a good cover of $X$. Let $H_\alpha = H\Big|_{ U_\alpha \times \TT}
= d B_\alpha$, where $B_\alpha \in \Omega^2(U_\alpha \times \TT)$ and finally, 
\be \label{difference} B_\beta -B_\alpha = F_{\alpha\beta}
\in \Omega^1(U_{\alpha\beta} \times \TT).\ee
 Then the choice of $H$-flux entails that we are given a local trivialization
 as above and locally defined 2-forms $B_\alpha$ on it, together with closed 2-forms $F_{\alpha\beta}$ defined on double overlaps,  that is, the Deligne class $(H, B_\alpha, F_{\alpha\beta})$. Also the first Chern class
 of $Z\to X$
  is represented in the integral cohomology by $(F, A_\alpha)$ where
$\{A_\alpha\}$ is a connection 1-form on $Z\to X$ and $F = dA_\alpha$ is the curvature 2-form of $\{A_\alpha\}$.

The {  {T-dual}}  is another principal
circle bundle over $X$, denoted by $\widehat Z$,
  {}
\begin{equation}\label{eqn:MVBy}
\begin{CD}
\widehat \bbT @>>> \widehat Z \\
&& @V\widehat \pi VV     \\
&& X \end{CD}
\end{equation}
To define it, we see that $\pi_* (H_\alpha) = d \pi_*(B_\alpha) = -d {\widehat A}_\alpha$,
 so that $\{{\widehat A}_\alpha\}$ is a connection 1-form, whose curvature $ d {\widehat A}_\alpha = \widehat F_\alpha =  \pi_*(H_\alpha)$
 that is, $\widehat F = \pi_* H$. So let $\widehat Z$ denote the principal
$\widehat \bbT$-bundle over $X$ whose first Chern class is  $\,\, c_1(\widehat Z) = [\pi_* H, \pi_*(B_\alpha)] \in H^2(X; \ZZ) $.

The Gysin
sequence \cite{BT} for $Z$ enables us to define a T-dual $H$-flux
$[\widehat H]\in H^3(\widehat Z,\ZZ)$, satisfying
\begin{equation} \label{eqn:MVBc}
c_1(Z) = \widehat \pi_* \widehat H \,,
\end{equation}
where $\pi_* $
and similarly $\widehat\pi_*$, denote the pushforward maps.
Note that $ \widehat H$ is not fixed by this data, since adding any integer
degree 3 cohomology class on $X$ that is pulled back to $\widehat Z$
also satisfies the requirements. However, $ \widehat H$ is
determined uniquely (up to cohomology) upon imposing
the condition $[H]=[\widehat H]$ on the correspondence space $Z\times_X \widehat Z$
as will be explained now.

The {\em correspondence space} (sometimes called the doubled space) is defined as
$$
Z\times_X  \widehat Z = \{(x, \widehat x) \in Z \times \widehat Z: \pi(x)=\widehat\pi(\widehat x)\}.
$$
Then we have the following commutative diagram,
\begin{equation} \label{eqn:correspondence}
\xymatrix @=6pc @ur { (Z, [H]) \ar[d]_{\pi} &
(Z\times_X  \widehat Z, [H]=[\widehat H]) \ar[d]_{\widehat p} \ar[l]^{p} \\ X & (\widehat Z, [\widehat H])\ar[l]^{\widehat \pi}}
\end{equation}
By requiring that
$$
p^*[H]={\widehat p}^*[\widehat H] \in H^3(Z\times_X  \widehat Z, \ZZ),
$$
determines $[\widehat H] \in H^3(\widehat Z, \ZZ)$  uniquely, via an application of the Gysin sequence.

An alternate way to see this is explained below.

Let $(H, B_\alpha, F_{\alpha\beta})$ denote the Deligne class of the closed integral 3-form $H$. Without loss of generality, we can assume that $H$ is $\TT$-invariant. We also choose a connection 1-form $A$ on $Z$.
Let $v$ denote the vector field generating the $\TT$-action on $Z$. Then define $\widehat A_\alpha = -\imath_v B_\alpha$ on the chart $U_\alpha$ and
the connection 1-form $\widehat A= \widehat A_\alpha +d\widehat\theta_\alpha$
on the chart $U_\alpha\times  \widehat \TT$. In this way we get a T-dual circle bundle
$\widehat Z \to X$ with connection 1-form $\widehat A$ and transition functions $\widehat h_{\alpha\beta}$ such that $\widehat A_\alpha -\widehat A_\beta=-d\ln\widehat h_{\alpha\beta}.$ Let $(L_{\alpha\beta}, \nabla^{L_{\alpha\beta}})$ be a line bundle with $\TT$-invariant connection over $\pi^{-1}(U_\alpha\cap U_{\beta})$ such that $(\nabla^{L_{\alpha\beta}})^2=F_{\alpha\beta}$. Then $(H, B_\alpha, F_{\alpha\beta}, (L_{\alpha\beta}, \nabla^{L_{\alpha\beta}}))$ is a gerbe with connection on $Z$.  We can further require that  $\mathrm{hol}^{\rf}(\nabla^{L_{\alpha\beta}})=\widehat h_{\alpha\beta},$ where $\mathrm{hol}^{\rf}(\nabla^{L_{\alpha\beta}})$ denotes the holonomy of the line bundle with connection $(L_{\alpha\beta}, \nabla^{L_{\alpha\beta}})$ on the fiber circles over $\pi^{-1}(U_\alpha\cap U_{\beta})$.  Note that this requirement can always be satisfied. Actually suppose $\nabla^{L_{\alpha\beta}}$ is any $\TT$-invariant connection such that $\nabla^{L_{\alpha\beta}}=F_{\alpha\beta}$. One has
\be
d\ln\mathrm{hol}^{\rf}(\nabla^{L_{\alpha\beta}})=-\iota_v F_{\alpha\beta}
=\iota_v B_\alpha-\iota_vB_\beta=\widehat A_\beta-\widehat A_\beta=d\ln \widehat h_{\alpha\beta}.
\ee
Then $\mathrm{hol}^{\rf}(\nabla^{L_{\alpha\beta}})$ and  $\widehat h_{\alpha\beta}$ only differ by a multiple of a constant and one can always adjust $\nabla^{L_{\alpha\beta}}$ by adding a $\TT$-invariant  1-form to make its fiberwise holonomy equal to $\widehat h_{\alpha\beta}$.

 Consider
$$
\Omega = H - A\wedge F_{\widehat A}
$$
where  $F_{\widehat A} = d {\widehat A}$ and $F_{A} = d {A}$ are the curvatures of $A$
and $\widehat A$ respectively. One checks that the contraction $i_v\Omega=0$ and
the Lie derivative $L_v\Omega=0$ so that $\Omega$ is a basic 3-form on $Z$, that is
$\Omega$ comes from the base $X$. Set
$$
\widehat H = F_A\wedge {\widehat A} + \Omega.
$$
This defines the T-dual flux 3-form on $\widehat Z$. One verifies that $\widehat H$ is a closed 3-form on $\widehat Z$.
It follows that on the correspondence space, one has as desired,
\begin{equation} \label{HhatH}
\widehat H = H + d (A\wedge \widehat A ).
\end{equation}

Our next goal is to determine the T-dual curving or B-field.
The Buscher rules imply that on the open sets $U_\alpha \times \TT\times \widehat \TT$ of the
correspondence space $Z\times_X \widehat Z$, one has
\begin{equation}\label{Busch}
\widehat B_\alpha = B_\alpha + A\wedge \widehat A - d\theta_\alpha \wedge d\widehat \theta _\alpha\,,
\end{equation}
Note that
\begin{equation}
\imath_v \widehat B_\alpha = \imath_v
\left( B_\alpha + A\wedge \widehat A - d\theta_\alpha \wedge d\widehat \theta _\alpha\right) =
-\widehat A_\alpha + \widehat A - d\widehat \theta_\alpha = 0
\end{equation}
so that $\widehat B_\alpha$ is indeed a 2-form on $\widehat Z$ and not just on the correspondence
space. Obviously, $d \widehat B_\alpha = \widehat H$. Let $\widehat F_{\alpha\beta}=\widehat B_\beta-\widehat B_\alpha$. Then we obtain the Deligne class $(\widehat H, \widehat B_\alpha, \widehat F_{\alpha\beta})$ on $\widehat Z$. From (\ref{Busch}), it is not hard to see that $-\imath_{\widehat v} \widehat B_\alpha=A_\alpha$. Let $(\widehat L_{\alpha\beta}, \nabla^{\widehat L_{\alpha\beta}})$ be line bundles with connections over $\widehat\pi^{-1}(U_\alpha\cap U_{\beta})$ such that $(\widehat \nabla^{L_{\alpha\beta}})^2=\widehat F_{\alpha\beta}$ Then one arrives at the complete
T-dual gerbe with connection, $(\widehat H, \widehat B_\alpha, \widehat F_{\alpha\beta}, (\widehat L_{\alpha\beta}, \nabla^{\widehat L_{\alpha\beta}}))$
(cf. \cite{BMPR}). We can also further require that $\mathrm{hol}^{\rf}(\nabla^{\widehat L_{\alpha\beta}})=h_{\alpha\beta},$ where $\mathrm{hol}^{\rf}(\nabla^{\widehat L_{\alpha\beta}})$ is the holonomy of the line bundles with connection $(\widehat L_{\alpha\beta}, \nabla^{\widehat L_{\alpha\beta}})$ on the fiber circles over $\widehat \pi^{-1}(U_\alpha\cap U_{\beta})$ and $h_{\alpha\beta}$ is the transition function of $Z$.

Define the Riemannian metrics on $Z$ and $\widehat Z$ respectively by
$$
g=\pi^*g_X+R^2\, A\odot A,\qquad \widehat g=\widehat\pi^*g_X+1/{R^2} \,\widehat A\odot\widehat A,
$$
where $g_X$ is a Riemannian metric on $X$.
 Then $g$ is $\TT$-invariant and the length of each circle fibre is $R$; $\widehat g$
 is $\widehat\TT$-invariant and the length of each circle fibre is $1/R$.

The rules for transforming the Ramond-Ramond (RR) fields can be encoded in the (\cite{BEM04a, BEM04b}) generalization of
{\em Hori's formula}
  {}
\begin{equation} \label{eqn:Hori}
T_*G =  \int^{\bbT} e^{ -A \wedge \widehat A }\ G \,,
\end{equation}
 where $G \in \Omega^\bullet(Z)^\bbT$ is the total RR field-strength,
\begin{center}
$G\in\Omega^{even}(Z)^\bbT \quad$ for {   { Type IIA}};\\
$G\in\Omega^{odd}(Z)^\bbT \quad$ for {   { Type IIB}},\\
\end{center}
and where the right hand side of equation \eqref{eqn:Hori} is an invariant differential form on $Z\times_X\widehat Z$, and
the integration is along the $\bbT$-fiber of $Z$.

Recall that the twisted cohomology
is defined as the cohomology of the complex
\be H^\bullet(Z, H) = H^\bullet(\Omega^\bullet(Z), d_H=d+ H\wedge).\ee
By the identity \eqref{eqn:Hori}, $T_*$ maps $d_H$-closed forms $G$ to $d_{\widehat
H}$-closed forms $T_*G$.
 So T-duality $T_*$  induces a map on twisted cohomologies,
$$
T : H^\bullet(Z, H) \to H^{\bullet +1}(\widehat Z, \widehat H).
$$

\section{Loop spaces and vertical holonomy line bundles}\label{loopofcirclebundle}

In this section, we first briefly recap the holonomy line bundles over loop spaces and the exotic twisted equivariant cohomology introduced in \cite{HM15}. Then we specialize to the loop spaces of principal circle bundles with connections that appear in the T-duality settings.  

\subsection{Review of exotic twisted equivariant cohomology on loop spaces} \label{reviewLM} Let $M$ be a smooth manifold and $\{\sU_\alpha\}$ be an open cover of $M$. To distinguish from the special case of principal circle bundles to be discussed  later in the context of T-duality, we use serif font type to denote the open sets in $M$. When the open cover $\{\sU_\alpha\}$ has some nice property,  $\{L\sU_\alpha\}$ can be an open cover of $LM$. For instance, if $\{\sU_\alpha\}$ is a maximal open cover of $M$ with the property that $H^i(\sU_{\alpha})=0$ for $i=2, 3$ and $\forall \alpha$, then $\{L\sU_\alpha\}$ is an open cover of $LM$. In fact, let $x:S^1\to M$ be a smooth loop in $M$ and $\sU_x$ a tubular neighbourhood of $x$ in $M$. Then $\{L\sU_x, x\in LM\}$ covers $LM$.

Let 
\beq\label{eqn:trans}
\tau: \Omega^\bullet(\sU_{\alpha_I} ) \longrightarrow \Omega^{\bullet-1}(L\sU_{\alpha_I} )
\eeq
be the transgression map 
\beq
\tau(\xi_I) = \int_{S^1} ev^*(\xi_I), \qquad \xi_I \in \Omega^\bullet(\sU_{\alpha_I} ).
\eeq
Here $ev$ is the evaluation map
\beq
ev: S^1 \times LM \to M: (t, x)\mapsto x(t).
\eeq

Let $\omega \in \Omega^i(M)$. Define $\widetilde\omega_s \in \Omega^i(LM)$ for $s\in [0,1]$ by
\beq
\widetilde \omega_s(X_1, \ldots, X_i)(x) = \omega(X_1\big|_{x(s)}, \ldots, X_i\big|_{x(s)})
\eeq
for $x\in LM$ and $X_1, \ldots, X_i$ vector fields on $LM$ defined near $x$. Then one checks that
$d \widetilde\omega_s = {\widetilde{d\omega}}_s$. Constructions such as this on loop space were considered in \cite{B85}.

The $i$-form, averaging $\omega$ on the loop space, 
\beq
\underline\omega:=\int_{0}^1  \widetilde\omega_s ds \in \Omega^i(LM)
\eeq
is $S^1$-invariant, that is, $L_K\left(\underline\omega\right) = 0$, where $K$ is the vector field on $LM$ generating rotation of loops. Moreover it is not hard to see that
$$d\underline\omega=\underline{d\omega}, \ \ \tau(\omega) = \iota_K\underline\omega.$$ We call $\underline \omega$ the average of $\omega$. 

Let $(H, B_\alpha, F_{\alpha\beta}, (L_{\alpha\beta}, \nabla^{L_{\alpha\beta}}))$ be a gerbe with connection on $M$, where 
$(H, B_\alpha, F_{\alpha\beta})$ denotes the Deligne class of the closed integral 3-form $H$ and $ (L_{\alpha\beta}, \nabla^{L_{\alpha\beta}})$ denotes the line bundle with connection on double overlaps $\sU_\alpha\cap \sU_\beta$ that determines the gerbe.

On the triple intersection $\sU_\alpha\cap \sU_\beta\cap \sU_\gamma$, there is a trivilization
\be \label{triv} (L_{\alpha\beta}, \nabla^{L_{\alpha\beta}})\otimes (L_{\beta\gamma}, \nabla^{L_{\beta\gamma}})\otimes (L_{\gamma\alpha}, \nabla^{L_{\gamma\alpha}})\simeq (\CC, d). \ee

For any loop $x\in L\sU_\alpha\cap L\sU_\beta$, i.e. $x: S^1\to \sU_\alpha\cap \sU_\beta$, consider the parallel transport equation of the line bundle with connection  $(L_{\alpha\beta},\nabla^{L_{\alpha\beta}})$,
\be \label{ptran}  \nabla_{\dot{x}(s)} \tau^0_s=0,\ \ \ \ \ \ \tau^0_0=\mathrm{Id}, \ee
where $\tau^0_s\in \Hom(L_{\alpha\beta}|_{x(0)}, L_{\alpha\beta}|_{x(s)})$ and $\dot{x}(s)$ is the tangent vector of the loop at time $s$. The holonomy of this parallel transport gives a smooth function 
\be g_{\alpha\beta}=\Tr \tau^0_1\ee
on $L\sU_\alpha\cap L\sU_\beta$.

It can seen from (\ref{triv}) that the following equality holds,
\be g_{\alpha\beta}g_{\beta\gamma}g_{\gamma\alpha}=1. \ee
The holonomy line bundle of this gerbe is defined as 
\be \cL_H:= \left(\coprod_{\alpha\in I} \{\alpha\}\times L\sU_\alpha\times \CC\right)\bigg/\sim,  \ee
where
\be (\beta, x, w)\sim  (\alpha, x, g_{\alpha\beta}(x)\cdot w), \ \ \forall \alpha, \beta\in I,\,  x\in L(\sU_\alpha\cap \sU_\beta), w\in \CC. \ee
Denote by $s_\alpha=(x, 1)$ the local section of $\cL_H$ on $L\sU_\alpha$.

\begin{proposition}[\protect Brylinski, Section 6.1 in \cite{Brylinski}] \label{gauge} The system of one forms $\{-\iota_{K}\underline {B_\alpha}\}$ obey the following gauge transformation laws,
\be d\ln g^{-1}_{\alpha\beta}=-\iota_{K}\underline {B_\alpha}-(-\iota_{K}\underline {B_\beta}). \ee

\end{proposition}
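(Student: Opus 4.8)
The plan is to trace how the transition functions $g_{\alpha\beta}$ of the holonomy line bundle $\cL_H$ vary as we move along the loop space $LM$, and to match this variation against the averaged local $B$-fields. Recall $g_{\alpha\beta}(x) = \Tr \tau^0_1$, where $\tau^0_s$ is the parallel transport of $(L_{\alpha\beta}, \nabla^{L_{\alpha\beta}})$ along the loop $x$ from time $0$ to time $s$, and the curvature satisfies $(\nabla^{L_{\alpha\beta}})^2 = F_{\alpha\beta}$. The first step is to write down, for a smooth $1$-parameter family of loops $x_\epsilon$, the variation $\frac{d}{d\epsilon}\big|_{\epsilon=0} \ln g_{\alpha\beta}(x_\epsilon)$. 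This is the standard nonabelian-but-here-abelian variation-of-holonomy computation: differentiating the parallel transport ODE $\nabla_{\dot x(s)}\tau^0_s = 0$ in the family parameter and integrating, one obtains that the logarithmic variation of the holonomy equals $-\int_{S^1} \iota_{\delta x}\iota_{\dot x(s)} F_{\alpha\beta}\, ds$ plus a boundary/basepoint term coming from the trivialization of the fiber at $x(0)$. Because the bundle here is a \emph{line} bundle (rank one), there is no ordering issue and the trace is transparent; the curvature $F_{\alpha\beta}$ is a closed $2$-form on $\sU_\alpha\cap\sU_\beta$, so the variation is a genuine closed $1$-form on $L(\sU_\alpha\cap\sU_\beta)$.

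Next I would recognize the right-hand side of that variation formula in terms of the averaging operation reviewed just above the statement. The contraction $\iota_{\dot x(s)}$ along the loop's tangent is exactly what $\iota_K$ produces after averaging: from the identities established in the review, $\tau(\omega) = \iota_K \underline{\omega}$ and $\underline{d\omega} = d\underline{\omega}$, so the transgression of $F_{\alpha\beta} = d B_\alpha - dB_\beta$ (more precisely $F_{\alpha\beta} = B_\beta - B_\alpha$, hence $dF_{\alpha\beta}=0$ with $F_{\alpha\beta}$ the curvature of $L_{\alpha\beta}$) can be rewritten. The cleanest route: $d\ln g_{\alpha\beta}$ as a $1$-form on $L\sU_\alpha\cap L\sU_\beta$ equals $-\iota_K \underline{F_{\alpha\beta}} = -\iota_K\underline{B_\beta} + \iota_K\underline{B_\alpha}$, using linearity of averaging and $F_{\alpha\beta} = B_\beta - B_\alpha$. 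Rearranging gives $d\ln g_{\alpha\beta}^{-1} = -\iota_K\underline{B_\alpha} - (-\iota_K\underline{B_\beta})$, which is precisely the claimed cocycle relation. One should double-check the sign conventions: the holonomy of a connection with curvature $F$ around a loop is $\exp(-\oint \text{connection}) = \exp(-\int_{\text{disk}} F)$ up to sign, and the transgression $\tau$ introduces the $\iota_K$; reconciling these fixes the overall sign and the inverse.

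The main obstacle I anticipate is \textbf{the basepoint term}: the holonomy function $g_{\alpha\beta}$ is defined via $\Tr\tau^0_1$ with an identification $\tau^0_0 = \mathrm{Id}$ of the fiber $L_{\alpha\beta}|_{x(0)}$, so a priori its variation formula carries a term involving the connection $1$-form of $L_{\alpha\beta}$ evaluated at $x(0)$ paired with $\delta x(0)$, which is \emph{not} manifestly an averaged quantity. The resolution is that this term cancels when one compares charts: the trivialization \eqref{triv} over triple overlaps forces the $g_{\alpha\beta}$ to be intrinsic (independent of basepoint choice), equivalently the cocycle $g_{\alpha\beta}g_{\beta\gamma}g_{\gamma\alpha}=1$ pins down the holonomies so that the would-be basepoint ambiguity is absorbed into the transition data; alternatively, one can observe directly that for a loop (closed path) the holonomy of a line bundle is basepoint-independent up to conjugation, and conjugation is trivial in rank one, so the basepoint term simply does not appear in $d\ln g_{\alpha\beta}$. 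Making this cancellation precise — rather than waving at it — is the one place where care is needed; everything else is the routine variation-of-holonomy calculation combined with the formal properties of $\underline{(\cdot)}$ and $\tau$ recalled above. This is, as stated, Brylinski's computation (Section 6.1 of \cite{Brylinski}), so I would largely follow that treatment, merely repackaging the output in the $\iota_K\underline{B_\alpha}$ notation used in this paper.
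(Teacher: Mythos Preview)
The paper does not supply its own proof of this proposition; it is stated with attribution to Brylinski (Section~6.1 of \cite{Brylinski}) and then used. Your proposal is essentially the standard variation-of-holonomy argument that Brylinski gives, repackaged in the paper's averaging notation, and it is correct: $d\ln g_{\alpha\beta} = -\tau(F_{\alpha\beta}) = -\iota_K\underline{F_{\alpha\beta}} = -\iota_K\underline{B_\beta} + \iota_K\underline{B_\alpha}$, whence the claim.

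One small comment: the basepoint term you worry about is genuinely a non-issue here, and the reason is simpler than the cocycle argument you sketch. For a \emph{line} bundle the holonomy around a closed loop lies in $\End(L_{x(0)})\cong\CC$ canonically, so no trivialization at $x(0)$ is needed to define $g_{\alpha\beta}$. Concretely, if one writes the connection locally as $d+a$, then $\delta\!\int_{S^1} a = \int_{S^1}\iota_{\delta x}\,da + [a(\delta x)]_0^1$, and the boundary term vanishes because both $x$ and the variation $\delta x$ are periodic in $s$. This holds without any appeal to the triple-overlap trivialization. So the ``care'' you flag is not actually needed, and the computation is as routine as you suggest.
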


This proposition actually allows one to equip a natural connection on the holonomy line bundle $\cL_H$. 
Let $\nabla^{\cL_H}$ be the connection on  $\cL_H$ such that on $L\sU_\alpha$ under the trivialization by the basis $s_\alpha$, the connection one form is  $-\iota_{K}\underline B_\alpha$.

Consider $\Omega^\bullet(LM, \cL_H)$, which is the space of differential forms on loop space $LM$ with values in
the holonomy line bundle $\cL_H \to LM$. Define $D_{\underline H}=\nabla^{\cL_H}-\iota_{K}+\underline H$. In \cite{HM15}, we found that the supperconnection $D_{\underline H}$ is $S^1$-equvariantly flat. 

\begin{theorem}[\cite{HM15}] \label{flat} $(D_{\underline H})^2=0$ on $\Omega^\bullet(LM, \cL_H)^{S^1}$.
\end{theorem}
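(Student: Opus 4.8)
The plan is to expand $(D_{\underline H})^2$ on $\Omega^\bullet(LM,\cL_H)^{S^1}$ and check that every term vanishes, using the $S^1$-invariance of the forms and the identities recorded earlier in the excerpt. Write $D_{\underline H}=\nabla^{\cL_H}-\iota_K+\underline H$. Squaring a superconnection of this shape produces four groups of terms: the square of the connection part, $(\nabla^{\cL_H})^2$; the square of the contraction part, $\iota_K^2$; the square of the multiplication part, $\underline H\wedge\underline H$; and the three mixed anticommutators $\{\nabla^{\cL_H},-\iota_K\}$, $\{\nabla^{\cL_H},\underline H\}$, and $\{-\iota_K,\underline H\}$. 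Since $\underline H$ is an odd (degree $3$) form, $\underline H\wedge\underline H=0$ for parity reasons, and $\iota_K^2=0$ since $\iota_K$ is an odd contraction. The anticommutator $\{-\iota_K,\underline H\}$ equals $-\iota_K(\underline H)=-\tau(H)$ by the identity $\tau(\omega)=\iota_K\underline\omega$ from Section \ref{reviewLM}; here one uses that $H$ is closed so $\underline{dH}=d\underline H=0$, and that interior multiplication by $K$ of a form is a zeroth-order operator, so this contributes the $0$-form-coefficient multiplication operator $-\tau(H)$, which will have to be combined with another piece below rather than vanishing on its own.

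The substantive part is the term $(\nabla^{\cL_H})^2$, which is multiplication by the curvature $2$-form of $\nabla^{\cL_H}$, plus the anticommutator $\{\nabla^{\cL_H},-\iota_K\}=-L_K^{\cL_H}$, the Lie derivative along $K$ on $\cL_H$-valued forms (Cartan's formula for the superconnection), plus $\{\nabla^{\cL_H},\underline H\}=d\underline H=\underline{dH}=0$ since $H$ is closed. By Proposition \ref{gauge} and the definition of $\nabla^{\cL_H}$, in the local frame $s_\alpha$ the connection $1$-form is $-\iota_K\underline{B_\alpha}$, so the curvature is $d(-\iota_K\underline{B_\alpha})=-d\iota_K\underline{B_\alpha}$. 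Using $L_K\underline{B_\alpha}=0$ (the average is $S^1$-invariant) and Cartan's formula $L_K=d\iota_K+\iota_K d$, this is $\iota_K d\underline{B_\alpha}=\iota_K\underline{dB_\alpha}=\iota_K\underline{H_\alpha}=\tau(H_\alpha)$; and since $H_\alpha=H|_{L\sU_\alpha}$ as forms, $\tau(H_\alpha)=\tau(H)$ globally. Thus the curvature contributes $+\tau(H)$, which exactly cancels the $-\tau(H)$ coming from $\{-\iota_K,\underline H\}$ identified above. Finally, on $S^1$-invariant sections the operator $L_K^{\cL_H}$ vanishes: a section $\phi\in\Omega^\bullet(LM,\cL_H)^{S^1}$ is by definition annihilated by the Lie derivative lifted to $\cL_H$ (one checks the natural $S^1$-action on $\cL_H$ is compatible with $\nabla^{\cL_H}$ because the connection $1$-forms $-\iota_K\underline{B_\alpha}$ are themselves $S^1$-invariant), so the mixed term $\{\nabla^{\cL_H},-\iota_K\}$ contributes zero on the relevant complex.

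Collecting everything, $(D_{\underline H})^2 = (\nabla^{\cL_H})^2 - L_K^{\cL_H} + d\underline H + \underline H\wedge\underline H - \iota_K^2 - \iota_K(\underline H) = \tau(H) - 0 + 0 + 0 - 0 - \tau(H) = 0$ on $\Omega^\bullet(LM,\cL_H)^{S^1}$, which is the claim. The main obstacle, and the step deserving the most care, is the bookkeeping around the curvature of $\nabla^{\cL_H}$: one must verify that the locally defined expressions $\tau(H_\alpha)$ patch to the global $0$-form multiplication operator $\tau(H)$ (equivalently, that $\iota_K\underline H$ is a well-defined function on $LM$, which follows since $H$ is globally defined), and that the sign conventions make the curvature term cancel the $\iota_K\underline H$ term rather than doubling it. Everything else — the parity vanishings, $d\underline H=\underline{dH}=0$, and the identity $\tau(\omega)=\iota_K\underline\omega$ — is routine given the material already developed in Section \ref{reviewLM}.
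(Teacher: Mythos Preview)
Your argument is correct and is essentially the standard one. Note, however, that in this paper the theorem is only \emph{quoted} from \cite{HM15} and no proof is given here; the closest thing in the present paper is the proof of the analogous Proposition (the vertical version $(\nabla^{\cL^\rv_H}-\iota_{K^\rv}+\underline H)^2+L_{K^\rv}=0$), which proceeds by exactly the same local computation you carry out: pass to the local frame so that $\nabla^{\cL_H}=d-\iota_K\underline{B_\alpha}$, use $(d-\iota_K)^2=-L_K$, note $d\underline H=0$ and $\iota_K\iota_K\underline{B_\alpha}=0$, and verify the key cancellation $-d\iota_K\underline{B_\alpha}-\iota_K\underline H=0$. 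Your use of $L_K\underline{B_\alpha}=0$ together with Cartan's formula to rewrite $-d\iota_K\underline{B_\alpha}=\iota_K\underline{dB_\alpha}=\iota_K\underline H$ is the same cancellation, just phrased slightly differently.

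One small slip: you call $-\tau(H)$ a ``$0$-form-coefficient multiplication operator'', but $\tau(H)=\iota_K\underline H$ is a $2$-form (since $H$ has degree $3$), matching the curvature term it cancels. This does not affect the argument.
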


This $S^1$-equvariant flatness allows us to introduce a cohomology theory on the loop space. The {\bf completed periodic exotic twisted $S^1$-equivariant cohomology} 
$$h^\bu_\TT(LM, \nabla^{\cL_H}:H)$$ introduced in \cite{HM15} is defined to be the cohomology of the complex
\be (\Omega^\bu(LM, \cL_H)^{S^1}[[u, u^{-1}]], \nabla^{\cL_H}-u\iota_{K}+u^{-1}\underline H), \ee
where $\mathrm{degree}(u)=2$.  

In \cite{HM15}, we established the following localization theorem, which localizes the completed periodic exotic twisted $S^1$-equivariant cohomology to the twisted cohomology of fixed point submanifold of $LM$, i.e. $M$. 
\begin{theorem}[\protect \cite{HM15}] \label{local} 
\be
h^\bu_\TT(LM, \nabla^{\cL_H}:H) \cong H^\bu(\Omega(M)[[u, u^{-1}]], d+u^{-1}H) \cong H^\bu(M, H)[[u, u^{-1}]].
\ee
\end{theorem}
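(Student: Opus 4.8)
The plan is to prove the first isomorphism by showing that pullback to the space of constant loops $M\subset LM$ — the fixed-point set of the rotation action — is a quasi-isomorphism, and then to read off the second isomorphism as a routine bookkeeping identity. Let $j\colon M\hookrightarrow LM$ denote the inclusion of constant loops. On $M$ one has $K|_M=0$, so the connection $\nabla^{\cL_H}$, which is $-\iota_K\underline{B_\alpha}$ in the trivialization $s_\alpha$, vanishes on $M$ and the transport holonomies $g_{\alpha\beta}$ restrict to $1$; hence $j^*\cL_H$ is canonically trivial. Moreover $\underline H|_M=H$, since averaging an invariant form over constant loops is the identity. Therefore $j^*$ is a chain map
$$
\bigl(\Omega^\bu(LM,\cL_H)^{S^1}[[u,u^{-1}]],\ \nabla^{\cL_H}-u\iota_{K}+u^{-1}\underline H\bigr)\ \longrightarrow\ \bigl(\Omega^\bu(M)[[u,u^{-1}]],\ d+u^{-1}H\bigr),
$$
and the content of the theorem is that it induces an isomorphism on cohomology.

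To see that $j^*$ is a quasi-isomorphism I would argue locally and then globalize. Choose a good cover $\{\sU_\alpha\}$ of $M$ by linearly convex charts; then $\{L\sU_\alpha\}$ covers $LM$ with the same nerve, since $L\sU_{\alpha_0}\cap\cdots\cap L\sU_{\alpha_p}=L(\sU_{\alpha_0}\cap\cdots\cap\sU_{\alpha_p})$. Over $L\sU_\alpha$ the bundle $\cL_H$ is trivialized by $s_\alpha$ and $\underline H$ becomes $d\underline{B_\alpha}$, so conjugation by $e^{u^{-1}\underline{B_\alpha}}$ turns the exotic differential into the ordinary Cartan differential $d-u\iota_K$ on $\Omega^\bu(L\sU_\alpha)^{S^1}[[u,u^{-1}]]$. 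The fibrewise-averaging map $\phi_\lambda(x)(t)=\bar x+\lambda\bigl(x(t)-\bar x\bigr)$, with $\bar x=\int_0^1 x(t)\,dt$, lands in $\sU_\alpha$ by convexity, is $S^1$-equivariant, and gives an $S^1$-deformation retraction of $L\sU_\alpha$ onto the constant loops $\sU_\alpha$ (and similarly on every intersection). Since completed periodic $S^1$-equivariant cohomology, in the Cartan model, is an $S^1$-homotopy invariant, $j^*$ is a quasi-isomorphism on every $L\sU_{\alpha_0\cdots\alpha_p}$. Feeding this into the Čech–de Rham double complexes for $\{L\sU_\alpha\}$ on $LM$ and $\{\sU_\alpha\}$ on $M$, the map $j^*$ of double complexes is a quasi-isomorphism column by column, and the associated (suitably $u$-completed) spectral sequence yields that $j^*$ is a quasi-isomorphism globally. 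This proves $h^\bu_\TT(LM,\nabla^{\cL_H}\!:H)\cong H^\bu(\Omega(M)[[u,u^{-1}]],d+u^{-1}H)$. The second isomorphism is then the standard identity: since $u$ is invertible, a rescaling of forms by powers of $u$ identifies $(\Omega(M)[[u,u^{-1}]],d+u^{-1}H)$ with the periodic completion of the $\Z/2$-graded twisted de Rham complex $(\Omega(M),d+H)$, whose cohomology is $H^\bu(M,H)[[u,u^{-1}]]$.

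The conceptual mechanism here — that inverting $u$ forces an equivariant theory to localize onto the fixed locus, so that a twist which is equivariantly exact near $M$ contributes nothing — is the familiar Borel/Atiyah–Bott principle, and on the convex charts it is completely transparent. The real obstacle is making the local-to-global passage rigorous on the infinite-dimensional manifold $LM$: one must verify that $\{L\sU_\alpha\}$ functions as a good cover in the Fréchet (or ILH) category, that the averaging retraction and the associated $S^1$-equivariant chain homotopies are smooth and lift to honest homotopies of $\cL_H$-valued forms, and that the Čech–de Rham spectral sequence converges compatibly with the $[[u,u^{-1}]]$-completion. It is this infinite-dimensional bookkeeping, rather than any single hard estimate, that requires care; once it is in place the theorem follows.
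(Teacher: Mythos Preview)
This theorem is not proved in the present paper: it is quoted from \cite{HM15} as a background result at the end of \S\ref{reviewLM}, with no argument given here. There is therefore no in-paper proof to compare your attempt against.

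For what it is worth, your outline is a correct and standard route to such a localization statement, and it is in the same spirit as the argument in \cite{HM15}: trivialize the twist locally by the gauge transformation $e^{u^{-1}\underline{B_\alpha}}$ to reduce to the untwisted Cartan complex $(d-u\iota_K)$, use an $S^1$-equivariant contraction of $L\sU_\alpha$ onto constant loops (your barycentric retraction $\phi_\lambda$ is indeed rotation-equivariant), and patch via the \v{C}ech--de~Rham bicomplex. The caveats you raise about smoothness in the Fr\'echet setting and compatibility of the spectral sequence with the $[[u,u^{-1}]]$-completion are precisely where the honest work lies. One point worth making explicit: on the infinite-dimensional space $L\sU_\alpha$ the $2$-form $\underline{B_\alpha}$ is not nilpotent, so $e^{u^{-1}\underline{B_\alpha}}$ involves infinitely many negative powers of $u$; this is consistent with the conventions of the present paper (the loop Hori kernels $e^{-u^{-1}\underline{A\wedge\widehat A}}$ have the same feature), but it means your conjugation step already uses the completion in an essential way and should be stated as such.
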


$\, $

\subsection{Loop spaces of principal circle bundles with connections and winding q-loop spaces} \label{loopprincipal} Now consider the situation in the T-duality picture, i.e. a principal circle bundle 
\begin{equation}\label{eqn:MVBx}
\begin{CD}
\bbT @>>> Z \\
&& @V\pi VV \\
&& X \end{CD}
\end{equation}
with a $\TT$-invariant connection 1-form $A$ and a background $\TT$-invariant 3-form flux $H$. We are now interested in the loop space $LZ$. Looping the bundle, we get a principal $L\TT$-bundle 
\begin{equation}
\begin{CD}
L\bbT @>>> LZ \\
&& @VL\pi VV \\
&& LX \end{CD}
\end{equation}

On $LZ$ it is clear that there are two circle actions: one comes from the circle action on the fibers of $Z$ and the other one comes from rotating loops. {\it To distinguish these two circle actions, we call them $\TT$-action and $S^1$-action respectively.} 
Let $v$ denote the vector field generating the $\TT$-action on $Z$. Denote by $\mathbf{v}$ the vector field generating by the the $\TT$-action on $LZ$. Since the $\TT$-action and the $S^1$-action are obviously commutative, we have $[K, \bfv]=0$, where $K$ still denotes the rotating vector fields along loops.  Later we will use $\hat v$, $\widehat K$ and $\hat {\mathbf{v}}$ to denote the similar vector fields on the dual side $\widehat Z$.

Let $\{U_\alpha\}$ be a Brylinski cover of $X$, i.e. $\{U_\alpha\}$ is a maximal open cover of $X$ with the property that $H^i(U_{\alpha_I})=0$ for $i=2, 3$ where $U_{\alpha_I} = \bigcap_{i\in I} U_{\alpha_i}, \,$ $|I|<\infty.$ Then $\{LU_\alpha \}$ gives an open cover of the loop space $LX$. Since $H^2(U_\alpha)=0$ and a principal circle bundle is topologically determined its first Chern class, one has 
\be \pi^{-1}(U_\alpha) \overset{\phi_\alpha}{\simeq} U_\alpha\times \TT.\ee
If $x: S^1\to Z$ is a smooth loop in $Z$, then $\pi\circ x: S^1\to X$ is a smooth loop in $X$. Suppose $\pi\circ x$ lies in some $U_\alpha$, then $x$ lies in $\pi^{-1}(U_\alpha)$. Therefore we see that $\{L\pi^{-1}(U_\alpha)\}$ is an open cover of $LZ$. 

To define the loop version of Hori formula, we restrict ourselves to a smaller loop space rather than $LZ$. Denote 
\be\label{q-loopspace} \LQ Z:=\left\{x: S^1\to Z \left|\, A(\dot x(s))\in \Z, \forall s\in [0, 1]\right.\right\}.\ee
We call $\LQ Z$  the {\bf winding quantized loop space} or {\bf wingding q-loop space} in short. 
\begin{remark}  $A(K(s))\in \Z$ actually should be  $A(\dot x(s))\in 2\pi i\Z$. For simplicity,  in the article we drop the factor $2\pi i$ for similar places. 
\end{remark}
\begin{remark}  Here we abuse the use of this terminology when there is a single circle bundle rather than there is a T-dual pair of circle bundles, where we also also use this terminology and denote it by $\LDQ Z$. Compare (\ref{q-loopspace}), (\ref{q-loopspaceX}), (\ref{dq-loopspaceX}), (\ref{dq-loopspaceZ}) and (\ref{dq-loopspacehatZ}).
\end{remark}

Decompose $\LQ Z$ into $\Z$-many disjoint components
\be \LQ Z=\coprod_{m\in \Z} \LQ_mZ,\ee
where 
\be \LQ_mZ:=\left\{x: S^1\to Z \left|\, A(K(s))=m, \forall s\in [0, 1]\right. \right\}.\ee

\begin{theorem}\label{hol} If $x\in \LQ Z$, then $\mathrm{hol}^Z(\pi\circ x)$, the holonomy of the circle bundle $Z$ along the projection loop $\pi\circ x$, is trivial. 
\end{theorem}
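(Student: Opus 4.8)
The plan is to compare the given loop $x$ with the horizontal lift of $\gamma:=\pi\circ x$ and to read off $\mathrm{hol}^Z(\gamma)$ as the logarithmic ``defect'' measuring the failure of $x$ to be horizontal; the quantization condition $A(\dot x(s))\in\ZZ$ then forces this defect to vanish in $\TT$.

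First I would recall the definition of the holonomy. Let $\widetilde\gamma\colon[0,1]\to Z$ be the horizontal lift of $\gamma$ through $x(0)$, so that $A(\dot{\widetilde\gamma}(s))=0$, $\pi\circ\widetilde\gamma=\gamma$, $\widetilde\gamma(0)=x(0)$, and $\widetilde\gamma(1)=\widetilde\gamma(0)\cdot\mathrm{hol}^Z(\gamma)$ with $\mathrm{hol}^Z(\gamma)\in\TT$ (well defined, independent of base point, since $\TT$ is abelian). As $x$ and $\widetilde\gamma$ both project to $\gamma$ and agree at $s=0$, there is a unique smooth $h\colon[0,1]\to\TT$ with $h(0)=1$ and $x(s)=\widetilde\gamma(s)\cdot h(s)$. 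Since $x$ is a loop, $x(1)=x(0)=\widetilde\gamma(0)=\widetilde\gamma(1)\cdot\mathrm{hol}^Z(\gamma)^{-1}$; comparing with $x(1)=\widetilde\gamma(1)\cdot h(1)$ yields $\mathrm{hol}^Z(\gamma)=h(1)^{-1}$.

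Next I would compute $A(\dot x(s))$. Writing the $\TT$-action additively and $h(s)=\exp(\beta(s))$ with $\beta\colon[0,1]\to\RR$, $\beta(0)=0$, I differentiate $x(s)=\widetilde\gamma(s)\cdot h(s)$ and use that (i) $A$ kills horizontal vectors, so the $\widetilde\gamma$-part contributes $(R_{h(s)}^*A)(\dot{\widetilde\gamma}(s))=(\Ad_{h(s)^{-1}}A)(\dot{\widetilde\gamma}(s))=A(\dot{\widetilde\gamma}(s))=0$, the middle equality being valid because $\TT$ is abelian; and (ii) $A$ restricts to the identity on the vertical (Lie algebra) direction, so the $h$-part contributes $\dot\beta(s)$. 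Hence $A(\dot x(s))=\dot\beta(s)$, so $\int_0^1 A(\dot x(s))\,ds=\beta(1)$, giving $h(1)=\exp\bigl(\int_0^1 A(\dot x(s))\,ds\bigr)$ and $\mathrm{hol}^Z(\gamma)=\exp\bigl(-\int_0^1 A(\dot x(s))\,ds\bigr)$.

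Finally, since $x\in\LQ Z$ we have $A(\dot x(s))\in\ZZ$ for all $s$ (dropping the factor $2\pi i$, as throughout the paper). The function $s\mapsto A(\dot x(s))$ is continuous and integer valued on the connected interval $[0,1]$, hence equals a constant $m\in\ZZ$ — precisely the integer labelling the component $\LQ_mZ$ containing $x$. Thus $\int_0^1 A(\dot x(s))\,ds=m\in\ZZ$, so $\mathrm{hol}^Z(\gamma)=\exp(-m)=1$ in $\TT=\RR/\ZZ$. The argument is essentially routine; the only delicate point is the bookkeeping of $A(\dot x(s))$ for a non-horizontal lift — in particular invoking the abelianness of $\TT$ to discard the $\Ad$ term — together with keeping the sign and $2\pi i$ conventions consistent with the rest of the paper, which I expect to be the main (mild) obstacle. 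An alternative, purely local route avoids the horizontal lift: cover $\gamma$ by the charts $U_\alpha$, express $x$ in the trivializations $\pi^{-1}(U_\alpha)\simeq U_\alpha\times\TT$ via the transition functions and local connection forms $A_\alpha$, and check that the product of the junction transition values times $\exp\bigl(-\sum_i\int A_{\alpha_i}(\dot\gamma)\bigr)$ telescopes to $\exp\bigl(-\int_0^1 A(\dot x)\bigr)$; this gives the same conclusion but is more cumbersome.
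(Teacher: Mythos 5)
Your proof is correct. It arrives at the same key identity as the paper, namely $\mathrm{hol}^Z(\pi\circ x)=\exp\bigl(\pm\int_0^1 A(\dot x(s))\,ds\bigr)$ modulo the integer lattice, but organizes the computation differently. The paper exploits the Brylinski cover: the projected loop $\pi\circ x$ lies in a single chart $U_\alpha$, so the authors write $A=A_\alpha+d\theta_\alpha$ in the trivialization $\pi^{-1}(U_\alpha)\simeq U_\alpha\times\TT$, observe that $\int_{S^1}d\theta_\alpha(\dot x)\,ds$ is the integer winding number of the fiber component of $x$, and identify $e^{\int_{S^1}A_\alpha(\dot\gamma)\,ds}$ with the holonomy, concluding that $\int_{S^1}A(\dot x)\,ds\in\ZZ$ if and only if the holonomy is trivial. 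You instead take the horizontal lift $\widetilde\gamma$ as the reference section and measure the defect $h(s)$, which makes the argument chart-free at the cost of the (correctly handled) bookkeeping of $R_{h(s)}^*A$, the vanishing of the $\Ad$ term for abelian $\TT$, and the lift $\beta$ of $h$ to $\RR$; the ``alternative local route'' you sketch at the end is essentially the paper's proof, except that no telescoping over charts is needed since one chart suffices. A minor remark: your observation that continuity forces $A(\dot x(s))$ to equal the constant $m$ labelling the component $\LQ_mZ$ is slightly more than is needed — only $\int_0^1 A(\dot x(s))\,ds\in\ZZ$ is used — but it is consistent with the paper's decomposition $\LQ Z=\coprod_m\LQ_mZ$.
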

\begin{proof}
For $x: S^1\to Z$, suppose $\pi\circ x\in U_\alpha$ and hence $x\in L\pi^{-1}(U_\alpha)$. On $\pi^{-1}(U_\alpha)$, let the connection 1-form $A=A_\alpha+d\theta_\alpha$. 
Then (using $K$ to denote $\dot x(s)$ for simplicity)
\be \int_{S^1}A(K)ds=\int_{S^1}d\theta_\alpha(K)ds+\int_{S^1}\widehat A_\alpha(K)ds=\int_{S^1}d\theta_\alpha(K)ds+\int_{S^1}\widehat A_\alpha(D\pi(K))ds, \ee
where $D\pi: TZ\to TX$ is the differential map of $\pi: Z\to X$.

Since $\pi\circ x$ lies in $U_\alpha$, we have the following composition of maps
\be S^1 \overset{x}{\to} \pi^{-1}(U_\alpha)\overset{\phi_\alpha}{\simeq} U_\alpha\times \TT\overset{p}{\to} \TT \ee and it is clear that
\be \int_{S^1}d\theta_\alpha(K)ds=\int_{S^1} (p\circ \phi_\alpha\circ x)^*(d\theta)\in \Z,\ee
where $\theta$ is the angular coordinate on $S^1$. 

Therefore $\int_{S^1}A(K)ds$ is an integer if and only if $\int_{S^1}\widehat A_\alpha(D\pi(K))ds$ is an integer. However we know that $\mathrm{hol}^Z(\pi\circ x)=e^{\int_{S^1}\widehat A_\alpha(D\pi(K))}ds$, so $\int_{S^1}A(K)ds$ is an integer if and only if $\mathrm{hol}^Z(\pi\circ x)$ is trivial. 
\end{proof}

Denote 
\be \label{q-loopspaceX} \LQ X:=\left\{\gamma: S^1\to X \left|\, \rmhol^Z(\gamma)=\mathrm{id}\right.\right\}.\ee
Given any $\gamma\in \LQ X$, solving the following equations
\be \label{liftingeqn}\left\{\begin{array}{cc}
                                     A(\dot{x}(s))=m\\
                                      D\pi(\dot{x}(s))=\dot\gamma(s)
                                     \end{array}
\right.
                                     \ee
with the initial value condition $\pi(x(0))=\gamma(0)$ gives a unique circle $x:[0, 1]\to Z$ that is in $\LQ_m Z$. Note that the triviality condition of $\rmhol^Z(\gamma)$ assures the existence of solution. For a fixed $\gamma$, various solutions differ by actions of elements in $\TT$ (shifting loops). It is also not hard to see that any loop in $\LQ_m Z$ that sits over $\gamma$ can be obtained in this way. Therefore, we see that $\LQ_m Z$ is actually a $\TT$-principal circle bundle over $\LQ X$:
\begin{equation}\label{bundlem}
\xymatrix
@=3pc
{\TT \ar[r]&
\LQ_mZ  \ar[d]^{L\pi}   \\ 
&
\LQ X 
}
\end{equation}
\begin{remark} Generically $\LQ_mZ$ are infinitely dimensional and disconnected
since $\LQ X$ is generically infinitely dimensional and disconnected. 
\end{remark}

$\LQ X$ is covered by $\{(LU_\alpha)\cap \LQ X\}$. For each $\gamma\in (LU_\alpha)\cap \LQ X$, consider the circle values $e^{\int_0^1\theta_\alpha(\tilde \gamma(s))ds}$, where $\tilde\gamma(s)\in (L\pi)^{-1}(\gamma)$ and $A=A_\alpha+d\theta_\alpha$ on $\pi^{-1}(U_\alpha)$. If $\tilde\gamma_1=\lambda\cdot \tilde\gamma_2$, $\lambda\in \TT$, then 
$$e^{\int_0^1\theta_\alpha(\tilde \gamma_1(s))ds}=\lambda\cdot e^{\int_0^1\theta_\alpha(\tilde \gamma_2(s))ds}.$$
Denote 
\be \label{localbasis} s_{m, \alpha}(\gamma):=e^{\int_0^1\theta_\alpha(\tilde \gamma(s))ds}\ee
with $\theta_\alpha(\tilde \gamma(0))=0$. Varying $\gamma$, $s_{m, \alpha}$ gives a smooth circle valued function on $(LU_\alpha)\cap \LQ X$ and is a local section of the circle bundle (\ref{bundlem}). The transition functions of these local sections are 
\be \label{trans} f_{\alpha\beta}(\gamma)=\frac{s_{m, \alpha}(\gamma)}{s_{m, \beta}(\gamma)}=e^{\int_0^1\phi_{\alpha\beta}( \gamma(s))ds},\ee
where $\phi_{\alpha\beta}=\theta_\alpha-\theta_\beta$. Adopting the notations in Section \ref{reviewLM}, one can write $f_{\alpha\beta}=e^{\underline{\phi_{\alpha\beta}}}$.
\begin{proposition} $\underline A$, the average of the connection 1-form $A$ on $\LQ_m Z$, is a connection 1-form on the circle bundle (\ref{bundlem}). 
\end{proposition}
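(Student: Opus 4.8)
The plan is to verify the two defining properties of a principal connection $1$-form directly from the averaging construction of Section~\ref{reviewLM}, exploiting that $\underline{(\cdot)}$ intertwines the relevant differential-geometric operations. First I would recall that $\LQ_m Z$ carries the $\TT$-action (shifting loops by a constant), whose infinitesimal generator is exactly the restriction $\bfv|_{\LQ_m Z}$, since the $\TT$-action on $Z$ restricts to $\LQ_m Z$ (the condition $A(\dot x(s))=m$ is $\TT$-invariant because $A$ is $\TT$-invariant) and the vector field it generates on $LZ$ is $\bfv$ by definition. So I must check: (i) $\iota_{\bfv}\,\underline A = 1$ (the normalization, remembering the $2\pi i$-convention the paper has adopted), and (ii) $L_{\bfv}\,\underline A = 0$, equivalently $\underline A$ is $\TT$-invariant; together with the fact that $\underline A$ is a genuine $1$-form on $\LQ_m Z$ (not just on some cover), these are exactly the conditions for a principal connection.

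For (i): since $A$ is $\TT$-invariant on $Z$ we have $\iota_v A$ constant equal to $1$ (in the paper's convention). Now $\iota_{\bfv}\,\underline A = \iota_{\bfv}\int_0^1 \widetilde A_s\, ds = \int_0^1 \iota_{\bfv}\,\widetilde A_s\, ds$, and at a loop $x$ the value $\bfv|_x$ evaluated through $\widetilde A_s$ is $A(v|_{x(s)}) = 1$ for every $s$, because $\widetilde A_s$ only sees the tangent vector at time $s$ and $\bfv$ is the looped vector field whose value at time $s$ is $v|_{x(s)}$. Integrating the constant $1$ over $s\in[0,1]$ gives $1$, as required. For (ii): $L_{\bfv}\,\underline A = \underline{L_v A} = \underline{0} = 0$ because averaging commutes with the Lie derivative along a looped vector field — this is the same formal identity as $d\underline\omega = \underline{d\omega}$ in Section~\ref{reviewLM} together with $\iota_{\bfv}\widetilde\omega_s = \widetilde{(\iota_v\omega)}_s$, using Cartan's formula. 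One should also note $\underline A$ transforms correctly under the cover: on $\pi^{-1}(U_\alpha)$ we have $A = A_\alpha + d\theta_\alpha$ with $A_\alpha$ basic, so $\underline A = \underline{A_\alpha} + \underline{d\theta_\alpha} = \underline{A_\alpha} + d\,\underline{\theta_\alpha}$, and by \eqref{localbasis}, \eqref{trans} one has $\underline{\theta_\alpha} - \underline{\theta_\beta} = \underline{\phi_{\alpha\beta}} = \ln f_{\alpha\beta}$, so $\underline A$ and the transition functions $f_{\alpha\beta}$ satisfy the usual compatibility $d\ln f_{\alpha\beta} = (\text{local connection form})_\alpha - (\text{local connection form})_\beta$ that makes $\underline A$ a well-defined global connection form relative to the local sections $s_{m,\alpha}$. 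Actually, since $A$ is already a global $1$-form on $Z$, $\underline A$ is automatically a global $1$-form on $LZ$, so this cover check is really just a consistency remark, not a necessity.

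The main obstacle I anticipate is purely bookkeeping rather than conceptual: keeping straight the two circle actions ($\TT$ versus $S^1$) and making sure the normalization $\iota_{\bfv}\underline A = 1$ is computed with $\bfv$ (the fiber-rotation generator on $LZ$) and not $K$ (the loop-rotation generator). One must also confirm that $\bfv$ is genuinely tangent to the submanifold $\LQ_m Z\subset LZ$ so that $\underline A$ restricted to $\LQ_m Z$ still satisfies $\iota_{\bfv}\underline A = 1$ there — this follows because the $\TT$-action preserves each $\LQ_m Z$, which in turn follows from $\TT$-invariance of $A$. No genuinely hard estimate or infinite-dimensional subtlety arises, since everything is expressed pointwise along loops via the $\widetilde{(\cdot)}_s$ construction and then integrated in $s$.
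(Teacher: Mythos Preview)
Your proof is correct but takes a different route from the paper. You verify the intrinsic defining properties of a principal connection $1$-form directly on the total space $\LQ_m Z$: the normalization $\iota_{\bfv}\,\underline A = 1$ and the $\TT$-invariance $L_{\bfv}\,\underline A = 0$, both inherited from the corresponding properties of $A$ on $Z$ via the averaging construction. The paper instead works entirely in local trivializations on the base: its whole proof is the single gauge-transformation identity $\underline{A_\alpha} - \underline{A_\beta} = -d\,\underline{\phi_{\alpha\beta}} = -d\ln f_{\alpha\beta}$, characterizing $\underline A$ as a connection through its local horizontal components relative to the sections $s_{m,\alpha}$ --- precisely the computation you relegate to a ``consistency remark'' at the end. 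Your approach is more self-contained and makes the link to the principal-bundle definition transparent; the paper's one-line version is terser because the identity \eqref{conn} is exactly what gets reused later (in the proof of Theorem~\ref{struc} and the construction of $\nabla^{\xi_m}$), so for the paper it doubles as a lemma rather than just a proof.
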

\begin{proof} We have
\be \label{conn} \underline {A_\alpha}-\underline {A_\beta}=-\underline{d\theta_\alpha}+\underline{d\theta_\beta}=-d\underline{\phi_{\alpha\beta}}=-d\ln f_{\alpha\beta}.\ee
The desired result follows. 
\end{proof}


\subsection{The $(m, n)$-loop correspondence spaces and the vertical holonomy line bundles}

Now let us deal with the situation in the T-duality. Adopt the same notations as in Section 1. Let $(H, B_\alpha, F_{\alpha\beta}, (L_{\alpha\beta}, \nabla^{L_{\alpha\beta}}))$ be a gerbe with connection on $Z$, where $(H, B_\alpha, F_{\alpha\beta})$ denotes the Deligne class of the closed integral 3-form $H$ and $ (L_{\alpha\beta}, \nabla^{L_{\alpha\beta}})$ denotes the line bundles on double overlaps $\pi^{-1}(U_\alpha\cap U_\beta)$ that determines the gerbe. Different from Section \ref{ReviewT}, where one uses a good cover on $X$, here we use  a Brylinski cover of $X$ to deal with loop spaces. Nevertheless, since $H^i(U_{\alpha_I})=0$ for $i=2, 3$, an $H$-flux on $Z$ can still be geometrically realized as a gerbe with connection  $(H, B_\alpha, F_{\alpha\beta}, (L_{\alpha\beta}, \nabla^{L_{\alpha\beta}}))$ as in Section \ref{ReviewT}. The only difference is that here $H^2(U_{\alpha\beta}\times \TT)$ might not vanish and the line bundles $L_{\alpha\beta}$ is not necessarily trivial. Same as Section 1, we take $\nabla^{L_{\alpha\beta}}$ and $\nabla^{\widehat L_{\alpha\beta}}$ such that $\mathrm{hol}^{\rf}(\nabla^{L_{\alpha\beta}})=\widehat h_{\alpha\beta}$ and  $\mathrm{hol}^{\rf}(\nabla^{\widehat L_{\alpha\beta}})=h_{\alpha\beta}.$

\subsubsection{The $(m,n)$-loop correspondence spaces}\label{mnloop}
Denote
\be \label{dq-loopspaceX}\LDQ X:=\left\{\gamma: S^1\to X \left|\, \rmhol^Z(\gamma)=\mathrm{id}, \ \rmhol^{\widehat Z}(\gamma)=\mathrm{id}\right.\right\}.\ee To allow $Z$ side and the dual $\widehat Z$ side correspond, we need to restrict to the loop spaces 
\be  \label{dq-loopspaceZ}\LDQ Z:= \left\{x: S^1\to Z \left|\, A(\dot x(s))\in \Z, \forall s\in [0, 1]\ \mathrm{and}\ \pi\circ x\in \LDQ X\right. \right\}\ee
and 
 \be \label{dq-loopspacehatZ}\LDQ \widehat Z:= \left\{\widehat x: S^1\to \widehat Z \left|\, \widehat A(\dot{\widehat x}(s))\in \Z, \forall s\in [0, 1]\ \mathrm{and}\ \widehat \pi\circ \widehat x\in \LDQ X\right. \right\}.\ee
We also call $ \LDQ Z$ and $\LDQ\widehat Z$ {\bf winding quantized loop spaces} or {\bf wingding q-loop space} to abuse notation. 

Let $ \LDQ_m Z :=\left\{ x \in  \LDQ Z\left|\, A(\dot{ x}(s)) = m, \forall s\in [0, 1]\right\}\right.$ and similarly 
$\LDQ_n \widehat Z$.

$\forall m, n\in \Z$, we have the following picture
\begin{gather} 
\begin{aligned}
\xymatrix{
\LDQ_m Z \ar[dr]_{L\pi} &&
\LDQ_n \widehat Z \ar[dl]^{L\widehat \pi}\\
&\LDQ X
}
\end{aligned}
\end{gather}

Define the {\bf $(m, n)$-loop correspondence space} $\cL(m, n)$ by
\be \cL(m , n):=\LDQ_m Z\times_{\LDQ X} \LDQ_n \widehat Z.\ee 
Averaging on both sides of (\ref{HhatH}), we have 
\be \underline{\widehat H} =  \underline H + \underline{d(A\wedge \widehat A)}=\underline H + d\left(\underline{A\wedge \widehat A}\right). \ee
Then one has the following commutative diagram, the loop version of  (\ref{eqn:correspondence}),
\begin{equation} \label{eqn:loopcorrespondence}
\xymatrix @=6pc @ur { (\LDQ_m Z, [\underline H])\ar[d]_{L\pi} &
(\cL(m, n), [\underline H]=[\underline{\widehat H}])\ar[d]_{\widehat p_n} \ar[l]^{p_m} \\ \LDQ X & (\LDQ_n \widehat Z, [\underline{\widehat H}])\ar[l]^{L\widehat \pi}}
\end{equation}

\subsubsection{The vertical holonomy line bundles and a structure theorem} On the loop space $LZ$, motivated by  the holonomy line bundle $\cL_H$, we will consider the vertical holonomy line bundle defined using the fiber structure of $Z$ as follows. For any loop $x\in L\pi^{-1}(U_\alpha)\cap L\pi^{-1}(U_\beta)$, i.e. $x: S^1\to \pi^{-1}(U_\alpha)\cap \pi^{-1}(U_\beta)$, consider the vertically parallel transport equation of the line bundle with connection $(L_{\alpha\beta},\nabla^{L_{\alpha\beta}})$,
\be \label{hptran}  \nabla_{\dot{x}(s)^\rv} \eta^0_s=0,\ \ \ \ \ \ \eta^0_0=\mathrm{Id}, \ee
where $\eta^0_s\in \Hom(L_{\alpha\beta}|_{x(0)}, L_{\alpha\beta}|_{x(s)})$ and $\dot{x}(s)^\rv$ is the vertical projection  (with respect to the connection $A$ on $Z$) of tangent vector of the loop at time $s$. The holonomy of this vertically parallel transport gives a smooth function 
\be g^\rv_{\alpha\beta}=\mathrm{hol}^\rv(\nabla^{L_{\alpha\beta}}):=\Tr \eta^0_1\ee
on $L\pi^{-1}(U_\alpha)\cap L\pi^{-1}(U_\beta)$. Similar to the holonomy case, we have
\be g^\rv_{\alpha\beta}g^\rv_{\beta\gamma}g^\rv_{\gamma\alpha}=1.\ee
Consequently we obtain a complex line bundle  $\cL_H^\rv$ on $LZ$, which we call {\bf vertical holonomy line bundle}. It can be explicitly defined as
\be \cL_H^\rv= \left(\coprod_{\alpha\in I} \{\alpha\}\times L\pi^{-1}(U_\alpha)\times \CC\right)\bigg/\sim,  \ee
where
\be (\beta, x, w)\sim  (\alpha, x, g^\rv_{\alpha\beta}(x)\cdot w), \ \ \forall \alpha, \beta\in I,\,  x\in L(\pi^{-1}(U_\alpha\cap U_\beta)), w\in \CC. \ee
Denote by $s^\rv_\alpha=(x, 1)$ the local section of $\cL_H^\rv$ on $L\pi^{-1}(U_\alpha)$. 

Dually on the $\widehat Z$ side, we have the line bundle $\cL_{\widehat H}^\rv$ and local sections $\{\widehat s^\rv_\alpha\}$. 

Unlike the holonomy line bundle, on the vertical holonomy line bundle $\cL_{H}^\rv$, one does not have a good theory of connections and super flatness as on the holonomy line bundle $\cL_{H}$. Nevertheless, when restricting to the winding q-loop spaces, we have the following nice results. 

Denote by $\xi_m$ the circle bundle 
\begin{equation}
\xymatrix
@=3pc
{\TT \ar[r]&
\LDQ_mZ  \ar[d]^{L\pi}   \\ 
&
\LDQ X 
}
\end{equation}
This is simply by restricting the circle bundle (\ref{bundlem}) to $\LDQ X$. By abusing notation, still denote by $\xi_m$ the complex line bundle over $\LDQ X$ associated to this circle bundle and the  standard representation of the circle on complex plane. Dually denote by $\widehat \xi_n$ the circle bundle
\begin{equation}
\xymatrix
@=3pc
{\LDQ_n\widehat Z \ar[d]_{L\widehat \pi}  &
\widehat \TT \ar[l]  \\ 
 \LDQ X&
}
\end{equation}
and still denote by $\widehat \xi_n$ the complex line bundle over $\LDQ X$ associated to this circle bundle and the  standard representation of the circle on complex plane.

\begin{theorem} \label{struc}On $\LDQ_n \widehat{Z}$, one has an isomorphism of line bundles 
\be l_{m, n}: (L\widehat \pi)^*(\xi_m^{\otimes n})\to \cL_{\widehat H}^\rv;\ee 
on $\LDQ_m Z$, one has an isomorphism of line bundles 
\be \widehat l_{n, m}:  (L\pi)^*({\widehat\xi}_n^{\otimes m}) \to \cL_{H}^\rv.\ee
\end{theorem}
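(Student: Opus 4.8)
The plan is to build the isomorphism $l_{m,n}$ explicitly in local trivializations and check it is globally well-defined by comparing transition functions. Fix the Brylinski cover $\{U_\alpha\}$ of $X$ and recall that over $L\pi^{-1}(U_\alpha)$ the vertical holonomy line bundle $\cL_{\widehat H}^\rv$ on $\widehat Z$ has the local section $\widehat s_\alpha^\rv$, with transition functions $g^\rv_{\alpha\beta} = \mathrm{hol}^\rv(\nabla^{\widehat L_{\alpha\beta}})$ obtained by vertically parallel transporting $(\widehat L_{\alpha\beta},\nabla^{\widehat L_{\alpha\beta}})$ around loops in the fibre direction of $\widehat Z$. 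On the other side, the pulled-back line bundle $(L\widehat\pi)^*(\xi_m^{\otimes n})$ over $\LDQ_n\widehat Z$ has local sections $(L\widehat\pi)^* s_{m,\alpha}^{\otimes n}$ with transition functions $\bigl(f_{\alpha\beta}\circ L\widehat\pi\bigr)^n = e^{n\,\underline{\phi_{\alpha\beta}}}\circ L\widehat\pi$, where $\phi_{\alpha\beta}=\theta_\alpha-\theta_\beta$ as in \eqref{trans}. The isomorphism $l_{m,n}$ will be defined fibrewise by sending $(L\widehat\pi)^* s_{m,\alpha}^{\otimes n}$ to $c_\alpha(\widehat x)\cdot \widehat s_\alpha^\rv$ for a suitable nowhere-vanishing function $c_\alpha$, and the content of the theorem is precisely that one can choose $c_\alpha$ so that these local identifications patch, i.e. so that $c_\alpha / c_\beta = g^\rv_{\alpha\beta} \big/ \bigl(e^{n\underline{\phi_{\alpha\beta}}}\circ L\widehat\pi\bigr)$.

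The key computation is to evaluate $g^\rv_{\alpha\beta}$ over a loop $\widehat x\in\LDQ_n\widehat Z$ in terms of data that comes from $Z$. Here is where the hypotheses are used: first, the normalization $\mathrm{hol}^\rf(\nabla^{\widehat L_{\alpha\beta}})=h_{\alpha\beta}$ (the transition functions of $Z$) relates the \emph{full} fibre holonomy of $\widehat L_{\alpha\beta}$ to $h_{\alpha\beta}$; second, on $\LDQ_n\widehat Z$ the vertical winding is $\widehat A(\dot{\widehat x}(s))=n$, so the vertical part of $\dot{\widehat x}$ wraps the $\widehat\TT$-fibre $n$ times as $s$ runs over $[0,1]$, which means the vertical parallel transport $\eta^0_1$ is the $n$-th power of the fibre holonomy up to the lift coordinate — concretely $g^\rv_{\alpha\beta}(\widehat x)$ should come out to $h_{\alpha\beta}^{\,n}$ evaluated along $\widehat\pi\circ\widehat x$, possibly twisted by the curving term $-\iota_{\widehat v}\widehat B_\alpha = A_\alpha$ (from the Buscher rule computation $-\imath_{\widehat v}\widehat B_\alpha = A_\alpha$ recorded in Section~\ref{ReviewT}). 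Matching this against $e^{n\underline{\phi_{\alpha\beta}}}\circ L\widehat\pi = f_{\alpha\beta}^n$ — recalling that $f_{\alpha\beta}=e^{\underline{\phi_{\alpha\beta}}}$ and that $h_{\alpha\beta}=e^{\phi_{\alpha\beta}}$ on $U_{\alpha\beta}$ (so that $A_\alpha-A_\beta=-d\ln h_{\alpha\beta}$, matching \eqref{conn}) — identifies the cocycle $c_\alpha/c_\beta$ as a coboundary for the refined cover, and local contractibility of the $U_{\alpha_I}$ (which is exactly what a Brylinski cover gives: $H^i(U_{\alpha_I})=0$ for $i=2,3$, and $H^1$ controllable) lets us solve for $c_\alpha$. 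The argument for $\widehat l_{n,m}$ is identical with the roles of $Z$ and $\widehat Z$, $A$ and $\widehat A$, $h_{\alpha\beta}$ and $\widehat h_{\alpha\beta}$ interchanged, using instead $\mathrm{hol}^\rf(\nabla^{L_{\alpha\beta}})=\widehat h_{\alpha\beta}$ and $-\imath_v\widehat B_\alpha$-type relations, together with the winding condition $A(\dot x(s))=m$ on $\LDQ_m Z$.

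The main obstacle will be the holonomy computation in the previous paragraph: one must carefully track how the vertical parallel transport $\eta^0_s$ around a loop with winding number $n$ in the fibre relates to the fibre-circle holonomy, being precise about base-point conventions (the local sections $s_{m,\alpha}$ are normalized by $\theta_\alpha(\tilde\gamma(0))=0$, cf. \eqref{localbasis}) and about the contribution of the horizontal part of $\dot{\widehat x}$, which is where $\underline{\phi_{\alpha\beta}}$ versus a pointwise $\phi_{\alpha\beta}$ must be reconciled via the averaging identities $\tau(\omega)=\iota_K\underline\omega$ and $d\underline\omega=\underline{d\omega}$ from Section~\ref{reviewLM}. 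A clean way to organize this is to write the parallel transport ODE \eqref{hptran} in the local frame where $\nabla^{\widehat L_{\alpha\beta}}=d+a_{\alpha\beta}$ with $a_{\alpha\beta}$ a $\widehat\TT$-invariant $1$-form satisfying $da_{\alpha\beta}=\widehat F_{\alpha\beta}=\widehat B_\beta-\widehat B_\alpha$ and $\mathrm{hol}^\rf=h_{\alpha\beta}$ forcing $\iota_{\widehat v}a_{\alpha\beta}$ to integrate to $\log h_{\alpha\beta}$; then $g^\rv_{\alpha\beta}(\widehat x)=\exp\!\bigl(-\int_0^1 a_{\alpha\beta}(\dot{\widehat x}(s)^\rv)\,ds\bigr)$ and the winding hypothesis turns the integral into $n\log h_{\alpha\beta}(\widehat\pi\circ\widehat x(0)) + (\text{averaged horizontal correction})$, at which point the comparison with $f_{\alpha\beta}^n$ is a matter of bookkeeping. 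Once $c_\alpha$ is produced, smoothness and the isomorphism property are immediate since each $c_\alpha$ is nowhere zero.
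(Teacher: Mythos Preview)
Your approach---compare transition functions in the local frames $\{\widehat s^{\rv}_\alpha\}$ and $\{(L\widehat\pi)^*s_{m,\alpha}^{\otimes n}\}$---is exactly what the paper does, but you are anticipating complications that do not occur, and in doing so you obscure the one observation that makes the proof a two-line computation.

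The point is that on $\LDQ_n\widehat Z$ one has $\dot{\widehat x}(s)^{\rv}=n\,\widehat v\big|_{\widehat x(s)}$ \emph{exactly}, for every $s$: by definition the vertical projection with respect to $\widehat A$ is $\widehat A(\dot{\widehat x}(s))\,\widehat v$, and the winding condition pins this coefficient to $n$. Hence the vertical parallel transport integrates the connection $1$-form only against $n\widehat v$. Since $\nabla^{\widehat L_{\alpha\beta}}$ is $\widehat\TT$-invariant and its full fibre holonomy is $h_{\alpha\beta}=e^{\phi_{\alpha\beta}}$, the integrand $a_{\alpha\beta}(n\widehat v)$ is simply $n\,\phi_{\alpha\beta}(\widehat\pi(\widehat x(s)))$, a function on the base. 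Integrating over $s\in[0,1]$ gives
\[
\widehat g^{\rv}_{\alpha\beta}(\widehat x)=\exp\Bigl(n\int_0^1\phi_{\alpha\beta}(\widehat\pi(\widehat x(s)))\,ds\Bigr)=e^{\,n\,\underline{\phi_{\alpha\beta}}}=f_{\alpha\beta}^{\,n}.
\]
There is no ``averaged horizontal correction'': the defining ODE \eqref{hptran} uses only $\dot{\widehat x}(s)^{\rv}$, so the horizontal part of $\dot{\widehat x}$ never enters. Nor does the curving term $-\iota_{\widehat v}\widehat B_\alpha=A_\alpha$ appear here; that identity is used later to identify $\nabla^{\cL^{\rv}_{\widehat H}}$, not the transition cocycle.

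Since the two transition cocycles agree on the nose, $c_\alpha\equiv 1$ already works: the map sending $(L\widehat\pi)^*s_{m,\alpha}^{\otimes n}\mapsto \widehat s^{\rv}_\alpha$ is globally well-defined. No \v{C}ech coboundary needs to be solved, and the Brylinski vanishing $H^i(U_{\alpha_I})=0$ plays no role at this step. Your formula ``$n\log h_{\alpha\beta}(\widehat\pi\circ\widehat x(0))+(\text{correction})$'' is not how the integral comes out; the whole point is that the base point moves along $\gamma=\widehat\pi\circ\widehat x$, and the integral is the \emph{average} $n\,\underline{\phi_{\alpha\beta}}(\gamma)$, which is precisely $\log f_{\alpha\beta}^n$. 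Once you see this, the proof collapses to the paper's version.
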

\begin{proof} For the line bundle $\cL_{\widehat H}^\rv$, the transition function $\widehat g^\rv_{\alpha\beta}$ on the intersection $$L\pi^{-1}(U_\alpha)\cap L\pi^{-1}(U_\beta)\cap \LDQ_n \widehat Z$$ is equal to $\mathrm{hol}^\rv(\nabla^{\widehat L_{\alpha\beta}})$. It is not hard to see that for any loop $\widehat x(s)\in \LDQ_n \widehat Z$, we have $\dot{\widehat x}(s)^\rv=n\widehat v|_{x(s)}, \forall s\in [0, 1]$.  Since $\mathrm{hol}^{\rf}(\nabla^{\widehat L_{\alpha\beta}})=h_{\alpha\beta}=e^{\phi_{\alpha\beta}},$ we therefore have 
\be \widehat g^\rv_{\alpha\beta}=\mathrm{hol}^{\rv}(\nabla^{\widehat L_{\alpha\beta}})=e^{n\,\underline{\phi_{\alpha\beta}}}=f_{\alpha\beta}^n,\ee
where $f_{\alpha\beta}$ is defined in (\ref{trans}).
Then the bundle map $l_{m, n}$ sending the local section $ (L\widehat \pi)^*(s_{m, \alpha}^{\otimes n})$ to $\widehat s^\rv_\alpha$ gives the desired isomorphism. 

The dual side can be similarly proved with a notation system of the hat-version of (\ref{localbasis}), (\ref{trans}) and (\ref{conn}). 

\end{proof}

\subsubsection{Super flatness and vanishing theorem} Let $K$ be the vector field of rotating loops on $LZ$. Let $K^\rv$ be the projection of $K$ on the vertical direction. Dually we have the vector field $\widehat K^\rv$ on $L\widehat Z$. Clearly, $\forall k\in \mathbb{Z}$, restricting on $\LDQ_k Z$, $K^\rv=k\bfv$ and on $\LDQ_k \widehat Z$, $\widehat K^\rv=k\widehat \bfv$. 
One has $\iota_{K^\rv} \underline {B_\alpha}=-k \underline {\widehat A_\alpha} $ since $\iota_v B_\alpha=-\widehat A_\alpha$. From the proof of Theorem \ref{struc}, we see that the transition function of the line bundle $\cL^\rv_H$ on $\LDQ_k Z$ is $g^\rv_{\alpha\beta}=e^{k\,\underline{\widehat \phi_{\alpha\beta}}}$, therefore 
\be (-\iota_{K^\rv} \underline {B_\alpha})-(-\iota_{K^\rv} \underline {B_\beta})=k\left(\underline {\widehat A_\alpha}-\underline {\widehat A_\beta}\right)=-d\ln g^\rv_{\alpha\beta}.\ee 
We therefore see that the 1-forms $\{-\iota_{K^\rv} \underline {B_\alpha}\}$ form a connection on $\cL^\rv_H$ over $\LDQ_k Z$ under the local basis $\{s^\rv_\alpha\}$.
Denote this connection by $\nabla^{\cL^\rv_H}$. Dually, the 1-forms $\{-\iota_{\widehat K^\rv} \underline {\widehat{B_\alpha}}\}$ form a connection on $\cL^\rv_{\widehat H}$ over $\LDQ_k \widehat{Z}$ under the local basis $\{\widehat s^\rv_\alpha\}$. Denote this connection by $\nabla^{\cL^\rv_{\widehat H}}$. 

From the proof of Theorem \ref{struc}, it is not hard to see that 
\be l_{m,n}^*(\nabla^{\cL^\rv_H} )=(L\widehat\pi)^*((\nabla^{\xi_m})^{\otimes n})), \ \ \widehat l_{n, m}^*(\nabla^{\cL^\rv_{\widehat H}})=(L\pi)^*( (\nabla^{\widehat \xi_n})^{\otimes m}),\ee
where $\nabla^{\xi_m}$ is the connection on the line bundle $\xi_m$ determined by $\underline A$ and the the  standard representation of the circle on complex plane, and $\nabla^{\widehat \xi_n}$ has the similar meaning.

Consider the line bundle $\cL^\rv_H$ on $\LDQ_k Z$. Since the transition functions of this line bundle are $\{e^{k\,\underline{\widehat{\phi}_{\alpha\beta}}}\}$, which are purely functions on $\LDQ X$, one can define the Lie derivative $L_{K^\rv}$ on $\Omega^*(\LDQ_k Z, \cL^\rv_{H})$ by acting only on the form part. Similarly one can define the Lie derivative $L_{\widehat K^\rv}$ on $\Omega^*(\LDQ_k\widehat Z,\cL^\rv_{\widehat H})$.

\begin{proposition}  \label{flat} $\forall k\in \mathbb{Z}$, we have: \newline
on $\Omega^*(\LDQ_k Z, \cL^\rv_{H})$, the following identity holds,
\be (\nabla^{\cL^\rv_H}-\iota_{K^\rv}+\underline H)^2+L_{K^\rv}=0; \ee
on $\Omega^*(\LDQ_k\widehat Z,\cL^\rv_{\widehat H})$, the following identity holds,
\be (\nabla^{\cL^\rv_{\widehat H}}-\iota_{\widehat K^\rv}+\underline {\widehat H})^2+L_{\widehat K^\rv}=0. \ee
\end{proposition}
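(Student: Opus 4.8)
The plan is to reduce everything to the classical Brylinski-type flatness computation already recorded in Theorem~\ref{flat} of Section~\ref{reviewLM}, rather than expanding the square of the superconnection by brute force. The key observation is that on the component $\LDQ_k Z$ the vertical vector field $K^\rv$ equals $k\bfv$, so $-\iota_{K^\rv}\underline{B_\alpha}=-k\,\iota_{\bfv}\underline{B_\alpha}$ and, crucially, the transition data $g^\rv_{\alpha\beta}=e^{k\,\underline{\widehat\phi_{\alpha\beta}}}$ are pulled back from $\LDQ X$. Because of this, the operator $D:=\nabla^{\cL^\rv_H}-\iota_{K^\rv}+\underline H$ has the same formal shape as the equivariantly flat superconnection $D_{\underline H}=\nabla^{\cL_H}-\iota_K+\underline H$ of Theorem~\ref{flat}, except that the rotation vector field $K$ is replaced by the vertical field $K^\rv=k\bfv$, the curving by $\underline{B_\alpha}$ restricted to $\LDQ_k Z$, and the connection one-form on the line bundle by $-\iota_{K^\rv}\underline{B_\alpha}$, which is exactly the analogue of Proposition~\ref{gauge} with $K$ replaced by $K^\rv$ (the gauge law $-d\ln g^\rv_{\alpha\beta}=(-\iota_{K^\rv}\underline{B_\alpha})-(-\iota_{K^\rv}\underline{B_\beta})$ has already been verified just above the statement).

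First I would expand $D^2$ locally on $L\pi^{-1}(U_\alpha)\cap\LDQ_k Z$ using the local connection one-form $\theta_\alpha^\rv:=-\iota_{K^\rv}\underline{B_\alpha}$, getting
\[
D^2 = (d\theta_\alpha^\rv + \tfrac12[\theta_\alpha^\rv,\theta_\alpha^\rv]) + \bigl(-\iota_{K^\rv}\bigr)^2 + (d+\theta_\alpha^\rv)\circ(-\iota_{K^\rv}) + (-\iota_{K^\rv})\circ(d+\theta_\alpha^\rv) + (d+\theta_\alpha^\rv)\underline H + \underline H\wedge(d+\theta_\alpha^\rv) + \underline H\wedge\underline H.
\]
Since $\underline H$ is a $3$-form this commutes past the $1$-form connection, and $\underline H\wedge\underline H=0$ for degree reasons; the interior product $\iota_{K^\rv}$ squares to zero; the cross term $d\iota_{K^\rv}+\iota_{K^\rv}d = L_{K^\rv}$ by Cartan's formula (acting on the form part only, which is legitimate precisely because the transition functions of $\cL^\rv_H$ are functions on the base $\LDQ X$, as noted in the paragraph preceding the statement). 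So the identity $D^2+L_{K^\rv}=0$ is equivalent to the purely local statement
\[
d\theta_\alpha^\rv - \iota_{K^\rv}(\underline{B_\alpha}\text{-curvature contributions}) + d\underline H\;\big|_{\text{relevant degrees}} + \bigl(\text{terms coming from }\theta_\alpha^\rv\wedge\underline H\text{ and }\iota_{K^\rv}\underline H\bigr) = 0,
\]
which I would then verify by plugging in $\theta_\alpha^\rv=-\iota_{K^\rv}\underline{B_\alpha}$, $d\underline{B_\alpha}=\underline{dB_\alpha}=\underline H$, $L_{K^\rv}\underline{B_\alpha}=0$ (since $K^\rv=k\bfv$ and $B_\alpha$ is $\TT$-invariant), and Cartan's magic formula $L_{K^\rv}=d\iota_{K^\rv}+\iota_{K^\rv}d$ applied to $\underline{B_\alpha}$: this yields $d\theta_\alpha^\rv = -d\iota_{K^\rv}\underline{B_\alpha} = -L_{K^\rv}\underline{B_\alpha}+\iota_{K^\rv}d\underline{B_\alpha} = \iota_{K^\rv}\underline H$, and the remaining cancellations are the same bookkeeping as in the proof of Theorem~\ref{flat} in \cite{HM15}.

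Alternatively — and this may be the cleanest route for the write-up — I would invoke Theorem~\ref{struc} together with the stated compatibility $\widehat l_{n,m}^*(\nabla^{\cL^\rv_{\widehat H}})=(L\pi)^*((\nabla^{\widehat\xi_n})^{\otimes m})$ (and its $Z$-side analogue) to transfer the whole computation to the base $\LDQ X$: under the isomorphism $l_{m,n}$ the superconnection $D$ pulls back to something expressed entirely in terms of $\underline A$, $\nabla^{\xi_m}$ and the basic form $\Omega$, where the identity becomes the finite-dimensional equivariant flatness already implicit in the T-duality picture of Section~\ref{ReviewT}; one only has to check that $\underline H$ restricted to $\LDQ_k Z$ decomposes compatibly with the curvature of $\xi_m^{\otimes n}$. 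The main obstacle I anticipate is purely organizational: making precise the claim that $L_{K^\rv}$ acts "only on the form part" and that this action is well defined on $\cL^\rv_H$-valued forms — i.e. confirming that the $\cL^\rv_H$-parallel structure is compatible with contracting/differentiating along $K^\rv=k\bfv$ because the clutching functions $g^\rv_{\alpha\beta}$ are $\bfv$-invariant (indeed independent of the fiber coordinate). Once that is nailed down, the degree/type cancellations are routine, and the dual statement on $\Omega^*(\LDQ_k\widehat Z,\cL^\rv_{\widehat H})$ follows verbatim by the hat-substitution $A\leftrightarrow\widehat A$, $v\leftrightarrow\widehat v$, $H\leftrightarrow\widehat H$.
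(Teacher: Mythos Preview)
Your first approach is exactly the paper's: trivialize locally via $s^\rv_\alpha$, expand $(d-\iota_{K^\rv}\underline{B_\alpha}-\iota_{K^\rv}+\underline H)^2$, use $(d-\iota_{K^\rv})^2=-L_{K^\rv}$, and reduce everything to the single identity $-d\iota_{K^\rv}\underline{B_\alpha}-\iota_{K^\rv}\underline H=0$; the paper verifies this directly as $k(\underline{d\widehat A_\alpha}-\underline{\iota_v H})=0$, whereas you reach the same thing via Cartan's formula applied to $\underline{B_\alpha}$ together with $L_{K^\rv}\underline{B_\alpha}=0$, and these two verifications are equivalent once one unwinds $\widehat A_\alpha=-\iota_v B_\alpha$. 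Your alternative route through Theorem~\ref{struc} is unnecessary and the vague displayed ``curvature contributions'' line should simply be replaced by the clean reduction $D^2+L_{K^\rv}=d\theta_\alpha^\rv-\iota_{K^\rv}\underline H$.
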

\begin{proof} Choose local basis $\{s^\rv_\alpha\}$. To prove the first identity, it suffices to prove that 
\be (d-\iota_{K^\rv} \underline {B_\alpha}-\iota_{K^\rv} +\underline H)^2+L_{K^\rv}=0.\ee
This holds as we have
$$(d-\iota_{K^\rv})^2+L_{K^\rv}=0, $$
$$d\underline{H}=0, \ \ \iota_{K^\rv}\iota_{K^\rv} \underline {B_\alpha}=0, $$
$$ -d\iota_{K^\rv} \underline {B_\alpha}-\iota_{K^\rv}\underline H=k\, d\underline {\widehat A_\alpha} -k\underline{\iota_{v}H}=k\,( \underline{d\widehat A_\alpha}-\underline{\iota_{v}H})=0.$$

One can similarly prove the second identity. 

\end{proof}

Denote
\be \gA^{\rv, \bullet}(\LDQ_kZ)=\Omega^\bullet(\LDQ_kZ,\cL^\rv_H)^\TT[[u, u^{-1}]],\ee
\be \gA^{\rv, \bullet}(\LDQ_k\widehat Z)=\Omega^\bullet(\LDQ_k\widehat Z,\cL^\rv_{\widehat H})^{\widehat \TT}[[u, u^{-1}]]. \ee
Denote the cohomologies of the following complexes 
\be (\gA^{\rv, \bullet}(\LDQ_kZ), \nabla^{\cL^\rv_H}-u\iota_{K^\rv}+u^{-1}\underline H),\ee
\be (\gA^{\rv, \bullet}(\LDQ_k\widehat Z), \nabla^{\cL^\rv_{\widehat H}}-u\iota_{\widehat K^\rv}+u^{-1}\underline {\widehat H}) \ee
by  
\be h^{\rv, \bu}(\LDQ_k Z, \nabla^{\cL_H^\rv}:\underline H),\ee
\be h^{\rv, \bu}(\LDQ_k \widehat Z, \nabla^{\cL_{\widehat H}^\rv}: \underline {\widehat H})\ee
respectively. 

When $k=0$, $K^\rv$ and $\widehat K^\rv$ are just 0, and from Theorem \ref{struc} as well as its proof, the line bundles $\cL_H^\rv$ and $\cL_{\widehat H}^\rv$ are trivial with trivial connections, and hence the complexes are simply
\be (\Omega^\bullet(\LDQ_0Z)^\TT[[u, u^{-1}]], d+u^{-1}\underline{H}), \ee
\be (\Omega^\bullet(\LDQ_0\widehat Z)^{\widehat \TT}[[u, u^{-1}]], d+u^{-1}\underline{\widehat H}). \ee
 
When $k\neq 0$, we have the following acyclic result, 
\begin{theorem}\label{acyc} If $k\neq 0$, then 
\be h^{\rv, \bu}(\LDQ_k Z, \nabla^{\cL_H^\rv}:\underline H)\cong \{0\},\ee
\be h^{\rv, \bu}(\LDQ_k \widehat Z, \nabla^{\cL_{\widehat H}^\rv}: \underline {\widehat H}) \cong \{0\}.\ee
\end{theorem}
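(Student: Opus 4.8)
The plan is to reduce the statement to the acyclicity of the ordinary circle bundle $\LDQ_kZ \to \LDQ X$ via the structure isomorphisms of Theorem~\ref{struc}, and then to run a Gysin / spectral-sequence argument in this infinite-dimensional setting.

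First I would use Theorem~\ref{struc} and the compatibility of connections recorded just before Proposition~\ref{flat}: on $\LDQ_k Z$ we have $\widehat l_{n,m}^*(\nabla^{\cL^\rv_{\widehat H}}) = (L\pi)^*((\nabla^{\widehat\xi_n})^{\otimes m})$, but more to the point, the transition functions of $\cL^\rv_H$ over $\LDQ_k Z$ are $\{e^{k\,\underline{\widehat\phi_{\alpha\beta}}}\}$, i.e. pulled back from $\LDQ X$, and the connection $1$-forms $\{-\iota_{K^\rv}\underline{B_\alpha}\} = \{k\,\underline{\widehat A_\alpha}\}$ are also pulled back from $\LDQ X$ (being $k$ times the local pieces of $\underline{\widehat A}$). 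So $(\cL^\rv_H, \nabla^{\cL^\rv_H})$ over $\LDQ_k Z$ is isomorphic to $(L\pi)^*$ of a line bundle with connection on $\LDQ X$ — call it $(\cM_k, \nabla^{\cM_k})$, with $\cM_k \cong \xi_0^{\otimes?}$; in fact by the structure theorem $\cM_k$ is a tensor power of the associated line bundle $\widehat\xi$ and its curvature is basic. Similarly $\underline H$ on $\LDQ_k Z$ differs from a basic form on $\LDQ X$ by an exact term controlled by $\underline A \wedge \underline{\widehat A}$. The upshot is that the complex $(\gA^{\rv,\bullet}(\LDQ_kZ), \nabla^{\cL^\rv_H} - u\iota_{K^\rv} + u^{-1}\underline H)$ becomes, after this identification, a twisted equivariant de Rham complex for the circle bundle $\LDQ_kZ \to \LDQ X$ with the free $\TT$-action generated by $\bfv$ (recall $K^\rv = k\bfv$), with coefficients in a pulled-back flat-up-to-$u$ bundle.

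The key step is then the \emph{equivariant de Rham theorem for a free action}: for a principal $\TT$-bundle $P \to B$ with generating vector field $\bfv$, the complex of $\TT$-invariant forms on $P$ equipped with $d - u\iota_{k\bfv} + \cdots$ ($k\neq 0$) is acyclic after inverting $u$, because the Cartan model for a free action computes $H^\bullet(B)$ but the ``$k\bfv$'' Cartan differential on $\TT$-invariant forms on $P$ (as opposed to basic forms) carries the extra weight; concretely, choosing the connection $1$-form $\underline A$ gives a contracting homotopy. The cleanest way: write any $\TT$-invariant $\cL^\rv_H$-valued form $\omega$ on $\LDQ_kZ$ as $\omega = \omega_0 + \underline A \wedge \omega_1$ with $\iota_\bfv \omega_i = 0$; the operator $\omega \mapsto u^{-1} k^{-1}\,\underline A \wedge \omega$ (up to signs and the $\underline H$-correction, handled by a Neumann-series / filtration argument since $\underline H$ raises degree) is a contracting homotopy for $\nabla^{\cL^\rv_H} - u\iota_{K^\rv} + u^{-1}\underline H$, exactly as in the finite-dimensional proof that $H^\bullet_\TT$ of a free action with a nontrivial weight vanishes. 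One then transports this back through $\widehat l_{n,m}$ (equivalently $l_{m,n}$ on the dual side) and through the identification of $\underline H$ with its basic representative.

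The main obstacle I expect is \emph{rigor in infinite dimensions}: $\LDQ_k Z$ and $\LDQ X$ are infinite-dimensional and generically disconnected, so ``integration along the $\TT$-fiber'' and the decomposition $\omega = \omega_0 + \underline A\wedge\omega_1$ need the fibration $L\pi$ to be well enough behaved (it is an honest principal $\TT$-bundle by the Proposition following \eqref{bundlem}), and the Neumann series in $u^{-1}\underline H$ converges only formally — which is fine since we work over $[[u,u^{-1}]]$, but one should note that $\underline H$ acts nilpotently on each fixed total form-degree component, or else filter by form degree and pass to the associated graded where only $\nabla^{\cL^\rv_H} - u\iota_{K^\rv}$ survives. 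A secondary point is checking that the contracting homotopy is $\TT$-invariant and $\cL^\rv_H$-linear, which follows because $\underline A$ is $\TT$-invariant and the bundle's transition functions are basic. Once the free-action vanishing is set up carefully, both displayed isomorphisms with $\{0\}$ follow, the $\widehat Z$ side being identical with hats.
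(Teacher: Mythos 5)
Your proposal is, at its core, the paper's own proof: the authors take as contracting homotopy multiplication by $\eta_k=\frac{u^{-1}\underline{A}}{u^{-1}\underline{F_A}-k}$, which is exactly your operator $-k^{-1}u^{-1}\underline{A}\wedge(\cdot)$ composed with the Neumann-series inverse of the lower-order correction, using only that $\underline A$ is a connection form on the principal bundle $\LDQ_kZ\to\LDQ X$ with $\iota_{K^\rv}\underline A=k\neq 0$; the preliminary reduction via Theorem \ref{struc} is not needed, since $\eta_k$ is scalar-valued and so commutes with the line-bundle data. One slip worth fixing: the correction you must invert does not come from $\underline H$ (the odd forms $\underline H$ and $\underline A$ anticommute, so $u^{-1}\underline H\wedge$ contributes nothing to the anticommutator with your homotopy) but from the curvature term $\{d,\underline A\wedge\}=\underline{F_A}\wedge$, giving $\{D,-k^{-1}u^{-1}\underline A\}=1-k^{-1}u^{-1}\underline{F_A}$; inverting this by the geometric series in $u^{-1}\underline{F_A}$ (well defined in $[[u,u^{-1}]]$ since each bidegree receives finitely many terms) recovers $\eta_k$ exactly.
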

\begin{proof} As shown in Section \ref{loopprincipal}, $\LDQ_k Z$ is the total space of the circle bundle
\begin{equation}
\xymatrix
@=3pc
{\TT \ar[r]&
\LDQ_kZ  \ar[d]^{L\pi}   \\ 
&
\LDQ X 
}
\end{equation}
and $\underline A$ is a connection. The curvature of the connection $\underline A$ is $\underline{dA}=\underline{F_A}$.  

For $k\neq 0$, set $\eta_k=\frac{u^{-1}\underline{A}}{u^{-1}\underline{F_A}-k}$. We have
\be 
\begin{split}
&(d-u\iota_{K^\rv})\eta_k\\
=&\frac{[(d-u\iota_{K^\rv})(u^{-1}\underline{A})](u^{-1}{\underline{F_A}-k)-u^{-1}\underline{A}\, [(d-u\iota_{K^\rv})(u^{-1}\underline{F_A}-k)]}}{(u^{-1}\underline{F_A}-k)^2}\\
=&\frac{(u^{-1}\underline{F_A}-k)^2}{(u^{-1}\underline{F_A}-k)^2}\\
=&1.
\end{split}
\ee

We therefore obtain a chain homotopy for $k\neq 0$:  $\forall x\in \Omega^\bullet(\LDQ_kZ,\cL^\rv_H)[[u, u^{-1}]],$
\be 
\begin{split}
&(\nabla^{\cL_H^\rv}-u\iota_{K^\rv}+u^{-1}\underline H)(\eta_nx)+\eta_n[(\nabla^{\cL_H^\rv}-u\iota_{K^\rv}+u^{-1}\underline H)x]\\
=&[(d-u\iota_{K^\rv})\eta_n]x\\
=&x.
\end{split}
\ee
We can prove the second isomorphism in a verbatim way by using $\underline{\widehat{A}}$ and $\underline{F_{\widehat{A}}}$ to construct the homotopy. 
\end{proof}

\section{Loop Hori formulae} \label{secloopHori}


\subsection{Twisted integration along the fiber} In order to construct the loop Hori formulae in the next subsection, we first briefly review the {\bf twisted integration along the fiber} introduced in \cite{HM18}.

Let $\pi: P\to M$ be a principal circle bundle over $M$, and $\Theta$ a connection one form on $P$. Let $L$ be a Hermitian line bundle over $M$ such that $Z$ is the circle bundle of $L$. Let $\nabla^L$ be the connection on $L$ corresponding to $\Theta$. Choose a good cover $\{U_\alpha\}$ on $M$ such that $\pi^{-1}(U_\alpha)\cong U_\alpha\times S^1.$ Let $\{f_\alpha\}$ be a local basis of $L$ corresponding to the constant map $U_\alpha\to \{1\}\subset S^1.$ 

$\forall n\in \ZZ$, the twisted integration along the fiber is defined as follows: for $\omega\in \Omega^*(P)$, 
\be  \int^{P/M, n}\!\!\!\!\!\omega\in \Omega^*(M, L^{\otimes n}), \ \mathrm{such\ that}\   \left.\left(\int^{P/M, n}\!\!\!\!\! \omega\right)\right|_{U_\alpha}=\left(\int^{\pi^{-1}(U_\alpha)/U_\alpha}\omega_\alpha e^{2\pi in\lambda_\alpha}\right)\otimes f_\alpha^{\otimes n},   \ee
where $\omega_\alpha=\omega|_{\pi^{-1}(U_\alpha)}$, $\lambda_\alpha$ is the vertical coordinates of $\pi^{-1}(U_\alpha).$ Note that as 
on $U_{\alpha\beta}=U_\alpha\cap U_\beta$,  $f_\alpha/f_\beta=e^{2\pi i(\lambda_\beta-\lambda_\alpha)}$ (a function on $U_{\alpha\beta}$), the above construction patches to be a global section of the bundle $\wedge^*(T^*M)\otimes L^{\otimes n}$. Moreover, it is not hard to see that this definition is independent of choice of the good cover $\{U_\alpha\}$ and local trivializations.

\begin{theorem}[\protect \cite{HM18}] \label{interchange} Let $Y$ be a vector field on $M$ and \, $\widetilde{Y}$ a lift of $Y$ on $P$. Let $H$ be a differential form on $M$. Then $\forall n\in \ZZ$
\be(\nabla^{L\otimes n}-\iota_Y+H)\int^{P/M, n}\!\!\!\!\!\omega=-\int^{P/M, n}\!\!\!\!\! (d+n\Theta-\iota_{\widetilde{Y}}+H)\omega.  \ee

\end{theorem}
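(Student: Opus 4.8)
## Proof proposal for Theorem \ref{interchange}

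The plan is to verify the identity locally over each chart $U_\alpha$ of the good cover, where everything is expressed through the trivializing section $f_\alpha$, and then to observe that both sides are globally well-defined so a local verification suffices. Write $\omega_\alpha = \omega|_{\pi^{-1}(U_\alpha)}$ and let $\lambda_\alpha$ be the vertical (angle) coordinate, so that $\Theta = d\lambda_\alpha + \pi^*\Theta_\alpha$ for a local connection potential $\Theta_\alpha$ on $U_\alpha$, and $\nabla^{L^{\otimes n}}$ acts on $(\,\cdot\,)\otimes f_\alpha^{\otimes n}$ as $(d + 2\pi i n\,\Theta_\alpha)\otimes f_\alpha^{\otimes n}$ (up to the usual $2\pi i$ conventions). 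First I would reduce the right-hand side: by definition
\[
\int^{P/M,n}\!\!\!\!\!(d + n\Theta - \iota_{\widetilde Y} + H)\omega \;\Big|_{U_\alpha}
= \left(\int^{\pi^{-1}(U_\alpha)/U_\alpha} (d + n\Theta - \iota_{\widetilde Y} + H)\omega_\alpha \; e^{2\pi i n\lambda_\alpha}\right)\otimes f_\alpha^{\otimes n}.
\]
The key is then to move the operator $d + n\Theta - \iota_{\widetilde Y} + H$ past the factor $e^{2\pi i n\lambda_\alpha}$ and past the fiber integration $\int^{\pi^{-1}(U_\alpha)/U_\alpha}$, picking up exactly the terms that assemble into $-(\nabla^{L^{\otimes n}} - \iota_Y + H)$ acting on the already-integrated form.

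The central computation is the commutation of $d$ and $\iota_{\widetilde Y}$ with fiber integration over the circle. I would use the standard facts: for fiber integration $\int^{P/M}$ along an oriented circle bundle one has $d\int^{P/M}\eta = -\int^{P/M} d\eta$ when $d$ on the base side is compared to $d$ on the total space (the sign being the usual degree shift), together with the projection/interchange formula $\iota_Y \int^{P/M}\eta = \int^{P/M}\iota_{\widetilde Y}\eta$ for any lift $\widetilde Y$ of $Y$ (the choice of lift does not matter because $\iota_v$ of a form kills one vertical leg and the fiber integral of a form with no vertical leg vanishes — more precisely, two lifts differ by a vertical vector field $g\,v$, and $\iota_{gv}\eta$ integrated over the fiber equals $g\int^{P/M}\iota_v\eta$, which matches the ambiguity on both sides). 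The term $n\Theta$ is handled using $\Theta = d\lambda_\alpha + \pi^*\Theta_\alpha$: the $d\lambda_\alpha$ part combines with the derivative $d(e^{2\pi i n\lambda_\alpha}) = 2\pi i n\, d\lambda_\alpha\, e^{2\pi i n\lambda_\alpha}$ produced when $d$ crosses the exponential, and the $\pi^*\Theta_\alpha$ part pulls out of the fiber integral to produce precisely the connection correction $2\pi i n\,\Theta_\alpha$ in $\nabla^{L^{\otimes n}}$. The term $H\wedge$, being pulled back from the base, simply commutes with everything (it pulls out of the fiber integral up to sign, matching the degree-shift sign on the other terms). Collecting signs, all four pieces acquire a common overall minus sign, which is the $-$ in the statement.

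Finally I would check well-definedness: the left-hand side is manifestly a section of $\wedge^\bullet T^*M \otimes L^{\otimes n}$ with the operator $\nabla^{L^{\otimes n}} - \iota_Y + H$ acting on it, and the right-hand side is $\int^{P/M,n}$ of a form on $P$, which the preceding discussion (the patching via $f_\alpha/f_\beta = e^{2\pi i(\lambda_\beta - \lambda_\alpha)}$) shows is globally defined; hence agreement on each $U_\alpha$ gives the global identity. The main obstacle, and the part demanding care, is bookkeeping the signs: fiber integration over the $1$-dimensional fiber shifts degree by $1$, so each of $d$, $\iota_{\widetilde Y}$, and $H\wedge$ anticommutes or commutes with $\int^{P/M}$ with a sign depending on form degree, and one must confirm these conspire to yield a single global sign $-1$ rather than degree-dependent signs; a clean way to organize this is to note that $d + n\Theta - \iota_{\widetilde Y} + H$ is (a twist of) an odd superconnection-type operator, and fiber integration over an odd-dimensional fiber is an odd operation, so their graded commutator is what produces the uniform sign. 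I expect everything else to be a routine unwinding of the definition of $\int^{P/M,n}$ from the previous subsection.
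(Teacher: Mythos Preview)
The paper does not actually prove this theorem: it is stated as a quotation from \cite{HM18} (the ``twisted integration along the fiber'' subsection is explicitly a brief review), so there is no proof in the present paper to compare against. Your proposal is therefore being judged on its own merits rather than against the authors' argument.

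That said, your approach is the natural one and is essentially correct. Working locally with the trivialization $f_\alpha$, expanding $\Theta = d\lambda_\alpha + \pi^*\Theta_\alpha$, and commuting each of $d$, $n\Theta$, $\iota_{\widetilde Y}$, $H\wedge$ past the exponential $e^{2\pi i n\lambda_\alpha}$ and the ordinary fiber integral is exactly how such an identity is established; the $d\lambda_\alpha$ piece of $n\Theta$ cancels the derivative of the exponential, and the $\pi^*\Theta_\alpha$ piece survives to give the connection term in $\nabla^{L^{\otimes n}}$, while $H$ and $\iota_{\widetilde Y}$ pass through with the expected degree-shift sign. Your remark that the choice of lift $\widetilde Y$ is irrelevant (since $\iota_{gv}\omega$ has no remaining vertical leg and hence integrates to zero over the fiber) is the right justification. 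The only place demanding genuine care is, as you say, the uniform sign: you should fix once and for all the convention for fiber integration (e.g.\ $\omega = \omega_0 + \omega_1\wedge\Theta$ with $\int^{P/M}\omega := \omega_1$, or the opposite ordering) and then verify directly that each of the four operators picks up the same sign under this convention, rather than appealing to the heuristic that an odd operator anticommutes with an odd integration. With that bookkeeping made explicit, the argument goes through.
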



\subsection{Loop Hori formulae}

Denote
\be \gB_{-j}^{\bullet}(\LDQ_k Z):=\{\omega\in \Omega^{\bullet}(\LDQ_kZ)[[u, u^{-1}]]|\, L_{\bfv}\omega=-j\omega \},\ee
\be \gB_{-j}^{\bullet}(\LDQ_k \widehat Z):=\{\widehat \omega\in \Omega^{\bullet}(\LDQ_k\widehat Z)[[u, u^{-1}]]|\, L_{\widehat \bfv}\widehat \omega=-j\widehat \omega \}.\ee

Let $\omega\in \gB_{-n}^{\bullet}(\LDQ_m Z)$. Define the {\bf $(m, n)$-loop Hori formula} by 
\be\label{loopHori} \tau_{m,n}(\omega)= \left(l_{m, n}^{-1}\right)^*\left(\int^{\TT, n}\omega\, e^{-u^{-1}\underline {A\wedge\widehat{A}}}\right)\in \Omega^{\bullet+1}(\LDQ_n \widehat Z,  \cL_{\widehat H}^\rv)[[u, u^{-1}]], \ee
where $\int^{\TT, n}$ stands for $\int^{\cL(m, n)/\LDQ \widehat Z, n}$ for simplicity.

Denote by $\rho_m$ the tautological global section of the line bundle $(L\widehat\pi\circ \widehat p_n)^*\xi_m$ on $\cL(m, n)$. Let $\widehat \theta\in \gA^{\rv, \bullet}(\LDQ_n\widehat Z)=\Omega^\bullet(\LDQ_n\widehat Z,\cL^\rv_{\widehat H})^{\widehat \TT}[[u, u^{-1}]]$. Define the {\bf inverse $(m, n)$-loop Hori formula} by
\be \label{ivloopHori}\widehat{\sigma}_{n, m}(\widehat \theta)= \int^{\widehat\TT}\widehat p^*_n(l_{m, n}^*(\widehat \theta)) \cdot (\rho_m^{-1})^{\otimes n} \cdot e^{u^{-1}\underline{A\wedge\widehat{A}}}\in \Omega^{\bullet+1}(\LDQ_mZ)[[u, u^{-1}]]. \ee

One can dually define the {\bf $(n, m)$-loop Hori formula} $\widehat\tau_{n, m}$ from $\widehat Z$ to $Z$ and the {\bf inverse $(n, m)$-loop Hori formula} $\sigma_{m, n}$ from $Z$ to $\widehat Z$.

\begin{theorem}\label{main} (i) $\tau_{m,n}(\omega)\in  \gA^{\rv, \bullet+1}(\LDQ_n\widehat Z)$ and 
\be \tau_{m,n}: \gB_{-n}^{\bullet}(\LDQ_m Z)\to  \gA^{\rv, \bullet+1}(\LDQ_n\widehat Z)\ee
is an isomorphism; $\widehat{\sigma}_{n, m}(\widehat \theta)\in \gB_{-n}^{\bullet+1}(\LDQ_m Z)$ and 
\be\widehat{\sigma}_{n, m}: \gA^{\rv, \bullet}(\LDQ_n\widehat Z)\to \gB_{-n}^{\bullet+1}(\LDQ_m Z) \ee
is an  isomorphism; $\widehat{\sigma}_{n, m}\circ\tau_{m,n}=-\mathrm{Id}, \  \tau_{m,n}\circ \widehat{\sigma}_{n, m}=-\mathrm{Id}.$ The dual results for $\widehat\tau_{n, m}$ and  $\sigma_{m, n}$ are also true. \newline
(ii) The map $ \tau_{m,n}$ induces a chain map on the complexes 
$$(\gB_{-n}^{\bullet}(\LDQ_m Z), d+u^{-1}\underline{H})\to ( \gA^{\rv, \bullet+1}(\LDQ_n\widehat Z),-( \nabla^{\cL^\rv_{\widehat H}}-u\iota_{\widehat K^\rv}+u^{-1}\underline {\widehat H}))$$ and the map $\widehat \sigma_{n, m}$ induces induces a chain map on the complexes 
$$(\gA^{\rv, \bullet}(\LDQ_n\widehat Z), \nabla^{\cL^\rv_{\widehat H}}-u\iota_{\widehat K^\rv}+u^{-1}\underline {\widehat H})\to(\gB_{-n}^{\bullet+1}(\LDQ_m Z), -(d+u^{-1}\underline{H})).$$ The dual results for $\widehat\tau_{n, m}$ and  $\sigma_{m, n}$ are also true. 
\end{theorem}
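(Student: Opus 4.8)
The plan is to derive everything from the structure theorem (Theorem \ref{struc}), the superflatness computation (Proposition \ref{flat}), and the interchange formula (Theorem \ref{interchange}) applied on the two legs of the loop correspondence diagram \eqref{eqn:loopcorrespondence}. The key geometric input is that on $\cL(m,n)$ the form $\underline{A\wedge\widehat A}$ interpolates between $\underline H$ and $\underline{\widehat H}$ (via the averaged identity $\underline{\widehat H}=\underline H+d(\underline{A\wedge\widehat A})$), so that multiplication by $e^{\pm u^{-1}\underline{A\wedge\widehat A}}$ intertwines the twisted differentials $d+u^{-1}\underline H$ and $d+u^{-1}\underline{\widehat H}$ on $\cL(m,n)$; this is the loop analogue of the finite-dimensional Hori computation in Section \ref{ReviewT}.

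First I would verify the type/grading claims in (i): that $\tau_{m,n}(\omega)$ really lands in $\gA^{\rv,\bullet+1}(\LDQ_n\widehat Z)$, i.e. that it is $\widehat\TT$-invariant with values in $\cL^\rv_{\widehat H}$. Invariance follows because integrating along the $\TT$-fiber of $p_m:\cL(m,n)\to\LDQ_n\widehat Z$ produces a form on $\LDQ_n\widehat Z$, and the $\widehat\TT$-action commutes with this integration; the twist by $\cL^\rv_{\widehat H}$ is exactly what the twisted fiber integration $\int^{\TT,n}$ combined with the isomorphism $l_{m,n}:(L\widehat\pi)^*(\xi_m^{\otimes n})\to\cL^\rv_{\widehat H}$ of Theorem \ref{struc} delivers. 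Dually for $\widehat\sigma_{n,m}$: here one integrates along the $\widehat\TT$-fiber of $\widehat p_n:\cL(m,n)\to\LDQ_m Z$, uses the tautological section $\rho_m$ to untwist, and checks that the output lies in the $L_\bfv$-eigenspace $\gB_{-n}^{\bullet+1}(\LDQ_m Z)$ — the eigenvalue $-n$ appearing because the section $(\rho_m^{-1})^{\otimes n}$ carries weight $-n$ under the $\TT$-action on $\LDQ_m Z$ pulled back to $\cL(m,n)$. The identities $\widehat\sigma_{n,m}\circ\tau_{m,n}=-\mathrm{Id}$ and $\tau_{m,n}\circ\widehat\sigma_{n,m}=-\mathrm{Id}$ I would prove by plugging in the definitions: the composite involves pulling back to $\cL(m,n)$, multiplying $e^{-u^{-1}\underline{A\wedge\widehat A}}\cdot e^{u^{-1}\underline{A\wedge\widehat A}}=1$, cancelling the $l_{m,n}$'s against their inverses and the $\rho_m$'s against their duals, and then performing the composite fiber integration $\int^{\TT}\!\int^{\widehat\TT}$ over $\cL(m,n)$, which over the base is integration along a $2$-torus of total "volume one" up to the sign $-1$ that comes from reordering the two fiber orientations (as in the finite-dimensional Hori inversion). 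Bijectivity is then automatic from having a two-sided inverse.

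For part (ii) — the chain-map property — the plan is to apply Theorem \ref{interchange} twice. On the leg $p_m:\cL(m,n)\to\LDQ_n\widehat Z$, with $\widetilde Y$ a lift of $\widehat K^\rv$ and with $n\Theta$ the fiber term, the interchange formula converts $\nabla^{\cL^\rv_{\widehat H}}-u\iota_{\widehat K^\rv}+u^{-1}\underline{\widehat H}$ downstairs into $-\int^{\TT,n}$ of $(d+n u^{-1}\Theta-\iota_{\widetilde Y}+u^{-1}\underline{\widehat H})$ upstairs; one then needs to check that on the image of $\omega\,e^{-u^{-1}\underline{A\wedge\widehat A}}$ the upstairs operator reduces (using $\underline{\widehat H}-d(\underline{A\wedge\widehat A})=\underline H$, that $\omega\in\gB_{-n}$ so the $L_\bfv$/fiber term accounts for the $n\Theta$, and that $\iota_{\widehat K^\rv}$ on $\cL(m,n)$ matches $\iota_\bfv$ suitably) to $d+u^{-1}\underline H$ acting on $\omega$. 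This is precisely the loop version of the computation proving \eqref{eqn:Hori} intertwines $d_H$ and $d_{\widehat H}$. The symmetric argument on the other leg handles $\widehat\sigma_{n,m}$, and the "dual results" for $\sigma_{m,n},\widehat\tau_{n,m}$ follow by interchanging the roles of $Z$ and $\widehat Z$ throughout (using the hatted versions of \eqref{localbasis}, \eqref{trans}, \eqref{conn} as already invoked in the proof of Theorem \ref{struc}).

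The main obstacle I expect is bookkeeping rather than conceptual: getting the powers of $u$, the $\TT$- versus $S^1$-weights, and the signs to line up so that exactly $-\mathrm{Id}$ (not $+\mathrm{Id}$ or $\pm u^k\mathrm{Id}$) appears, and making sure the twisted fiber integrations $\int^{\TT,n}$ on an infinite-dimensional bundle behave formally like their finite-dimensional counterparts — i.e. that Theorem \ref{interchange} is legitimately applicable with $P=\cL(m,n)$, $M=\LDQ_n\widehat Z$ (or $\LDQ_m Z$) even though these are infinite-dimensional, which is safe because the fibers $\TT,\widehat\TT$ are honest finite-dimensional circles and all constructions are fiberwise. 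A secondary subtlety is confirming that the lift $\widetilde Y$ of $\widehat K^\rv$ can be chosen compatibly with $K^\rv$ on the correspondence space so that the two applications of the interchange formula are consistent; this follows from $[K,\bfv]=0$ and the fact that $K^\rv=k\bfv$ on each component.
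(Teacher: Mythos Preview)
Your plan for part (ii) is essentially the paper's: use the averaged identity $\underline{\widehat H}=\underline H+d(\underline{A\wedge\widehat A})$ to convert $(d+u^{-1}\underline H)$ into $(d+u^{-1}\underline{\widehat H})$ on the correspondence space, observe that the extra piece $(n\underline A-u\iota_{\widehat K^\rv})$ annihilates $\omega\,e^{-u^{-1}\underline{A\wedge\widehat A}}$, and then invoke Theorem~\ref{interchange}. That is exactly what the paper does.

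For part (i) the paper takes a different and more concrete route than your abstract ``cancel the exponentials and Fubini over the $2$-torus'' sketch. The paper works in local trivialisations: an element $\omega\in\gB_{-n}^\bullet(\LDQ_mZ)$ is written as $(\omega_{\alpha,0}+\omega_{\alpha,1}\underline A)e^{-2\pi in\lambda_\alpha}$, and the key device is the auxiliary form
\[
\kappa\bigl(\underline A,\underline{\widehat A}\bigr):=e^{u^{-1}(\underline A\wedge\underline{\widehat A}-\underline{A\wedge\widehat A})},
\]
which is checked to satisfy $\iota_{\bfv}\kappa=\iota_{\widehat\bfv}\kappa=0$ and hence descends to $\LDQ X$. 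This lets one replace the ``unsplit'' kernel $e^{-u^{-1}\underline{A\wedge\widehat A}}$ by $\kappa\cdot e^{-u^{-1}\underline A\wedge\underline{\widehat A}}$, after which the $\TT$- and $\widehat\TT$-fiber integrations can be carried out explicitly term by term. From the resulting closed local formulas for $\tau_{m,n}$ and $\widehat\sigma_{n,m}$ one simply reads off both the target spaces and the relations $\widehat\sigma_{n,m}\circ\tau_{m,n}=-\mathrm{Id}$, $\tau_{m,n}\circ\widehat\sigma_{n,m}=-\mathrm{Id}$.

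Your version of (i) has a genuine gap at the step ``multiplying $e^{-u^{-1}\underline{A\wedge\widehat A}}\cdot e^{u^{-1}\underline{A\wedge\widehat A}}=1$''. In the composite $\widehat\sigma_{n,m}\circ\tau_{m,n}$ the first exponential sits \emph{inside} $\int^{\TT,n}$, while the second is applied only \emph{after} that integral has been performed and pulled back along $\widehat p_n$; they never literally meet as a product. A Fubini/projection-formula argument can be made to work, but to separate the $\TT$- and $\widehat\TT$-dependence of $\underline{A\wedge\widehat A}$ one needs precisely the decomposition $\underline{A\wedge\widehat A}=\underline A\wedge\underline{\widehat A}-u\log\kappa$ with $\kappa$ basic---i.e.\ the same observation the paper isolates. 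Without it, neither your target-space claims (in particular the $\widehat\TT$-invariance of $\tau_{m,n}(\omega)$, which in the paper is visible only after the $\kappa$-rewriting) nor the $-\mathrm{Id}$ identities are actually justified.
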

\begin{proof} (i) Let $\{s_{m,\alpha}\}$ be local sections of of the line bundle $\xi_m$ and $\{\widehat{s}_{n,\alpha}\}$ local sections of of the line bundle $\widehat{\xi_n}$ as described in Section \ref{loopprincipal}. Let $\lambda_\alpha$ be the vertical coordinate function of $(L\pi)^{-1}(LU_\alpha\cap \LDQ X)$ and $\widehat{\lambda}_\alpha$ the vertical coordinate function of $(L\widehat{\pi})^{-1}(LU_\alpha\cap \LDQ X)$. 

Take $\omega\in \gB_{-n}^{\bullet}(\LDQ_m Z)$. Locally, $\omega$ is of the form
$$(\omega_{\alpha, 0}+ \omega_{\alpha, 1}\underline{A})e^{-2\pi in\lambda_\alpha},$$ where $\omega_{\alpha, 0}$ and  $\omega_{\alpha, 1}$ are both forms on $LU_\alpha\cap \LDQ X$. Then 
\be \label{tau}
\begin{split}
&\left.\tau_{m,n}(\omega)\,\right|_{(L\widehat\pi)^{-1}(LU_\alpha\cap \LDQ X)}\\
=&\left. \left(l_{m, n}^{-1}\right)^*\left(\int^{\TT, n}\omega\, e^{-u^{-1}\underline {A\wedge\widehat{A}}}\right)\, \right|_{(L\widehat\pi)^{-1}(LU_\alpha\cap \LDQ X)}\\
=&\left(l_{m, n}^{-1}\right)^*\left\{\left( \int^{\TT}(\omega_{\alpha, 0}+ \omega_{\alpha, 1}\underline{A})e^{-u^{-1}\underline{A\wedge\widehat{A}}}e^{-2\pi in\lambda_\alpha}\cdot e^{2\pi i n\lambda_\alpha}\right)\otimes(L\widehat\pi)^*(s_{m,\alpha}^{\otimes n})\right\}\\
=&\left( \int^{\TT}(\omega_{\alpha, 0}+ \omega_{\alpha, 1}\underline A)e^{-u^{-1}\underline{A\wedge\widehat{A}}}\right)\otimes\widehat s^\rv_\alpha.
\end{split}
\ee

On $\cL(m, n)$, consider the form 
\be \kappa\!\left(\underline A, \underline {\widehat A}\right):=e^{u^{-1}(\underline A\wedge \underline{\widehat A}-\underline{A\wedge\widehat{A}})}. \ee
One can see that 
\be\iota_\bfv\kappa\!\left(\underline A, \underline {\widehat A}\right)=\kappa(\underline A, \underline {\widehat A})u^{-1} \left((\iota_\bfv{\underline A})\wedge \underline{\widehat A}-\underline{(\iota_vA)\wedge\widehat{A}}\right)=0,\ee
and 
\be\iota_{\widehat\bfv}\kappa\!\left(\underline A, \underline {\widehat A}\right)=-\kappa(\underline A, \underline {\widehat A})u^{-1} \left({\underline A}\wedge \iota_{\widehat\bfv}{\underline{\widehat A}}-\underline{A\wedge\iota_{\widehat v}\widehat{A}}\right)=0.\ee
Therefore $\kappa\!\left(\underline A, \underline {\widehat A}\right)$ is actually a form on $\LDQ X$. 

Hence 
\be 
\begin{split} \label{taumn}
&\left( \int^{\TT}(\omega_{\alpha, 0}+ \omega_{\alpha, 1}\underline A)e^{-u^{-1}\underline{A\wedge\widehat{A}}}\right)\otimes(\widehat s^\rv_\alpha)^{\otimes n}\\
=&\left( \int^{\TT}\left(\kappak\omega_{\alpha, 0}+ \kappak\omega_{\alpha, 1}\underline A\right)e^{-u^{-1}\underline{A}\wedge\underline{\widehat{A}}}\right)\otimes(\widehat s^\rv_\alpha)^{\otimes n},\\
\end{split}
\ee
which is equal to
$$-\left(\kappak\omega_{\alpha, 0}\,\underline{\widehat A}+\kappak\omega_{\alpha, 1}\right)\otimes(\widehat s^\rv_\alpha)^{\otimes n}$$
if $\omega$ is of even degree; or
$$\left(\kappak\omega_{\alpha, 0}\,\underline{\widehat A}+\kappak\omega_{\alpha, 1}\right)\otimes(\widehat s^\rv_\alpha)^{\otimes n}$$
if $\omega$ is of odd degree. So we see that
$$ \tau_{m,n}(\omega) \in \gA^{\rv, \bullet+1}(\LDQ_n\widehat Z). $$

On the other hand, if $\widehat \theta\in \gA^{\rv, \bullet}(\LDQ_n\widehat Z)$, suppose $\widehat\theta$ is locally equal to
$$\left(\widehat\eta_{\alpha, 0}\underline{\widehat A}+\widehat\eta_{\alpha, 1}\right)\otimes\widehat s^\rv_\alpha, $$
where $\widehat\eta_{\alpha, 0}, \widehat\eta_{\alpha, 1}$ are forms on $LU_\alpha\cap \LDQ X$.
Then $\widehat{\sigma}_{n, m}(\widehat\theta)$ is locally equal to 
\be \label{T hat}
\begin{split}
&\left. \int^{\widehat\TT}\widehat p^*_n(l_{m, n}^*(\widehat \theta)) \cdot (\rho_m^{-1})^{\otimes n} \cdot e^{u^{-1}\underline{A\wedge\widehat{A}}}\, \right|_{(L\pi)^{-1}(LU_\alpha\cap \LDQ X)}\\
=&\left(\int^{\widehat\TT}\left(\widehat\eta_{\alpha, 0}\underline{\widehat A}+\widehat\eta_{\alpha, 1}\right) \cdot e^{u^{-1}\underline{A\wedge\widehat{A}}}\right)\cdot e^{-2\pi in \lambda_\alpha}\\
=& \left(\int^{\widehat\TT}\left(\kappak^{-1}\widehat\eta_{\alpha, 0}\underline{\widehat A}+\kappak^{-1}\widehat\eta_{\alpha, 1}\right) \cdot e^{u^{-1}\underline{A}\wedge\underline{\widehat{A}}}\right)\cdot e^{-2\pi in \lambda_\alpha}
\end{split}
\ee which is equal to
$$\left(\kappak^{-1}\widehat\eta_{\alpha, 0}+\kappak^{-1}\widehat\eta_{\alpha, 1}\underline{A}\right)\cdot e^{-2\pi in \lambda_\alpha} $$if $\widehat\theta$ is of even degree; or
$$ -\left(\kappak^{-1}\widehat\eta_{\alpha, 0}+\kappak^{-1}\widehat\eta_{\alpha, 1}\underline{A}\right)\cdot e^{-2\pi in \lambda_\alpha} $$if $\widehat\theta$ is of odd degree.
And so evidently $L_\bfv \widehat{\sigma}_{n, m}(\widehat\theta)=-n\widehat{\sigma}_{n, m}(\widehat\theta)$. We therefore have
$$ \widehat{\sigma}_{n, m}(\widehat\theta)\in  \gB_{-n}^{\bullet}(\LDQ_m Z). $$

From the above expressions (\ref{taumn}), (\ref{T hat}) and local nature of (i), we see that $\tau_{m,n}, \widehat \sigma_{n,m}$ are both isomorphisms and 
\be \widehat{\sigma}_{n, m}\circ\tau_{m,n}=-\mathrm{Id}, \  \tau_{m,n}\circ \widehat{\sigma}_{n, m}=-\mathrm{Id}.\ee

The dual results for $\widehat\tau_{n, m}$ and  $\sigma_{m, n}$ can be proved similarly.

$\, $

(ii) We have 
\be [(d+u^{-1}\underline{H})\omega]e^{-u^{-1}\underline{A\wedge\widehat{A}}}=[d+u^{-1}\underline H-u^{-1}(\underline H-\underline {\widehat H})](\omega e^{-u^{-1}\underline{A\wedge\widehat{A}}})=(d+u^{-1}\underline{\widehat H})(\omega e^{-u^{-1}\underline{A\wedge\widehat{A}}}). \ee
Also one has
\be 
\begin{split}
&(n\underline A-u\iota_{\widehat K^\rv})(\omega e^{-u^{-1}\underline{A\wedge\widehat{A}}})\\
=&n\underline{A}\,\omega e^{-u^{-1}\underline{A\wedge\widehat{A}}}-u\iota_{\widehat K^\rv}(\omega e^{-u^{-1}\underline{A\wedge\widehat{A}}})\\
=&(-1)^{|\omega|}\omega(n\underline A-n\underline A)e^{-u^{-1}\underline{A\wedge\widehat{A}}}\\
=&0.  
\end{split}
\ee

By Theorem \ref{interchange}, we have
\be
\begin{split}
&\tau_{m,n}((d+u^{-1}\underline H)\omega)\\
=& \left(l_{m, n}^{-1}\right)^*\left(\int^{\TT, n}(d+u^{-1}\underline H)\omega\, e^{-u^{-1}\underline {A\wedge\widehat{A}}}\right)\\
=& \left(l_{m, n}^{-1}\right)^*\left(\int^{\TT, n}\left(d+u^{-1}\underline {\widehat H}\right)\left(\omega\, e^{-u^{-1}\underline {A\wedge\widehat{A}}}\right)\right)\\
=& \left(l_{m, n}^{-1}\right)^*\left(\int^{\TT, n}\left(d+n\underline A-u\iota_{\widehat K^\rv}+u^{-1}\underline {\widehat H}\right)\left(\omega\, e^{-u^{-1}\underline {A\wedge\widehat{A}}}\right)\right)\\
=&\left(l_{m, n}^{-1}\right)^*\left(-\left((L\widehat{\pi})^*\nabla^{\xi_m^{\otimes n}}-u\iota_{\widehat K^\rv}+u^{-1}\underline{\widehat{H}}\right)\int^{\TT, n} \omega\, e^{-u^{-1}\underline {A\wedge\widehat{A}}}\right) \\
=&-\left(\nabla^{\cL^\rv_{\widehat H}}-u\iota_{\widehat K^\rv}+u^{-1}\underline{\widehat{H}}\right)\tau_{m,n}(\omega).
\end{split}
\ee
Therefore $\tau_{m,n}$ is a chain map. 

As $-\widehat \sigma_{n, m}$ is the inverse of $\tau_{m,n}$ , one deduces that $\widehat \sigma_{n,m}$ is also a chain map. 

The dual results for $\widehat\tau_{n, m}$ and  $\sigma_{m, n}$ can be proved similarly.
\end{proof}

As an immediate consequence, we have
\begin{corollary}  $\forall m, n\in \ZZ$, 
\be H(\Omega^{\bullet}(\LDQ_mZ)^\TT[[u, u^{-1}]], d+u^{-1}\underline{H})\cong H(\Omega^{\bullet+1}(\LDQ_n\widehat Z)^{\hat \TT}[[u, u^{-1}]], -(d+u^{-1}\underline{\widehat H})).\ee
\end{corollary}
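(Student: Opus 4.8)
The plan is to deduce the corollary formally from Theorem \ref{main}, using only the special structure of the $k=0$ component; no further analytic input is needed. First I would record three elementary remarks that handle the bookkeeping. (a) Since $\TT$ is connected, $L_{\bfv}\omega=0$ is equivalent to $\TT$-invariance, so $\gB_0^\bu(\LDQ_k Z)=\Omega^\bu(\LDQ_k Z)^\TT[[u,u^{-1}]]$ with differential $d+u^{-1}\underline H$, and likewise on the $\widehat Z$ side. (b) For $k=0$ one has $K^\rv=0$ and, as noted just before Theorem \ref{acyc} (building on Theorem \ref{struc}), the vertical holonomy line bundles $\cL_H^\rv$ and $\cL_{\widehat H}^\rv$ are canonically trivial with trivial connection; hence $\gA^{\rv,\bu}(\LDQ_0 Z)$ with its differential is literally $\bigl(\Omega^\bu(\LDQ_0 Z)^\TT[[u,u^{-1}]],\,d+u^{-1}\underline H\bigr)$, and similarly $\gA^{\rv,\bu}(\LDQ_0\widehat Z)=\bigl(\Omega^\bu(\LDQ_0\widehat Z)^{\widehat\TT}[[u,u^{-1}]],\,d+u^{-1}\underline{\widehat H}\bigr)$. (c) Replacing a differential by its negative leaves kernels and images, hence cohomology, unchanged, so the stray signs $-(\cdots)$ in Theorem \ref{main}(ii) are irrelevant at the level of $H^\bu$.

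With these in hand I would specialise Theorem \ref{main} to the case in which the second index equals $0$. By parts (i) and (ii), $\tau_{m,0}$ is then a degree-shifting isomorphism of complexes
\[
\bigl(\Omega^\bu(\LDQ_m Z)^\TT[[u,u^{-1}]],\,d+u^{-1}\underline H\bigr)\ \xrightarrow{\ \sim\ }\ \bigl(\Omega^{\bu+1}(\LDQ_0\widehat Z)^{\widehat\TT}[[u,u^{-1}]],\,-(d+u^{-1}\underline{\widehat H})\bigr),
\]
so by remark (c) it induces $H^\bu(\LDQ_m Z,\underline H)\cong H^{\bu+1}(\LDQ_0\widehat Z,\underline{\widehat H})$ for every $m\in\ZZ$. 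Applying the dual map $\widehat\tau_{n,0}$ in the same way gives, for every $n\in\ZZ$, an isomorphism $H^\bu(\LDQ_n\widehat Z,\underline{\widehat H})\cong H^{\bu+1}(\LDQ_0 Z,\underline H)$; the right-hand side is independent of $n$, so all the cohomologies $\{H^\bu(\LDQ_n\widehat Z,\underline{\widehat H})\}_{n\in\ZZ}$ are mutually isomorphic, and in particular each is isomorphic to $H^\bu(\LDQ_0\widehat Z,\underline{\widehat H})$. Composing these,
\[
H^\bu(\LDQ_m Z,\underline H)\ \cong\ H^{\bu+1}(\LDQ_0\widehat Z,\underline{\widehat H})\ \cong\ H^{\bu+1}(\LDQ_n\widehat Z,\underline{\widehat H})\qquad(m,n\in\ZZ),
\]
and reinstating the harmless sign on the right-hand differential yields exactly the stated isomorphism.

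There is no obstacle of substance here: the corollary is a formal consequence of the chain-isomorphism content of Theorem \ref{main} together with the identification of the $k=0$ complexes. The one point that requires attention is that the Hori maps $\tau_{m,n}$ with $n\neq0$ cannot be used to compare the two $\TT$-invariant complexes directly — their target $\gA^{\rv,\bu}(\LDQ_n\widehat Z)$ is a genuinely twisted complex, in fact acyclic by Theorem \ref{acyc} — so one is forced to route the comparison through the $k=0$ slot and to invoke the Hori isomorphism twice, once on each side, which is precisely why the $n$-independence step above is needed. Beyond that it is only a matter of tracking the degree shift by one and the signs.
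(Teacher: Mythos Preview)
Your proof is correct and follows essentially the same route as the paper: specialize Theorem \ref{main} to $\tau_{m,0}$ and $\widehat\tau_{n,0}$, use the identification of the $k=0$ complexes with the $\TT$-invariant twisted de Rham complexes, and route the comparison through the $k=0$ slot on each side. The paper's proof is more terse and leaves your remarks (a)--(c) implicit, but the argument is the same.
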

\begin{proof} From Theorem \ref{main}, $\forall m\in \ZZ$, $\tau_{m,0}$ induces an isomorphism,
\be H(\Omega^{\bullet}(\LDQ_mZ)^\TT[[u, u^{-1}]], d+u^{-1}\underline{H})\cong H(\Omega^{\bullet+1}(\LDQ_0\widehat Z)^{\hat \TT}[[u, u^{-1}]], -(d+u^{-1}\underline{\widehat H})),\ee
and $\forall n\in \ZZ$, $\widehat \tau_{n, 0}$ induces an isomorphism 
\be H(\Omega^{\bullet+1}(\LDQ_n\widehat Z)^{\hat \TT}[[u, u^{-1}]], -(d+u^{-1}\underline{\widehat H}))\cong H(\Omega^{\bullet}(\LDQ_0Z)^\TT[[u, u^{-1}]], d+u^{-1}\underline{H}).\ee
The desired results follows. 
\end{proof}

Combining Theorem \ref{acyc} and Theorem \ref{main}, we have
\begin{corollary} If $n\neq 0$, 
\be H(\gB_{-n}^{\bullet}(\LDQ_m Z), d+u^{-1}\underline{H})\cong \{0\}, \forall m\in \ZZ;\ee
if $m\neq 0$, 
\be H(\gB_{-m}^{\bullet}(\LDQ_n \widehat Z), d+u^{-1}\widehat{\underline{H}})\cong \{0\}, \forall n\in \ZZ.\ee
\end{corollary}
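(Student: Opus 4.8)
The plan is to obtain the Corollary purely by transporting the acyclicity statement of Theorem~\ref{acyc} across the loop Hori isomorphisms established in Theorem~\ref{main}; no genuinely new computation is required.

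\textbf{First,} I would invoke Theorem~\ref{main}(i): for all $m,n\in\ZZ$ the map $\tau_{m,n}$ is a linear isomorphism $\gB_{-n}^{\bullet}(\LDQ_m Z)\to\gA^{\rv,\bullet+1}(\LDQ_n\widehat Z)$ with inverse $-\widehat{\sigma}_{n,m}$, and Theorem~\ref{main}(ii): $\tau_{m,n}$ intertwines the differential $d+u^{-1}\underline H$ on the source with the differential $-(\nabla^{\cL^\rv_{\widehat H}}-u\iota_{\widehat K^\rv}+u^{-1}\underline{\widehat H})$ on the target, and $-\widehat{\sigma}_{n,m}$ is likewise a chain map. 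Hence $\tau_{m,n}$ and $-\widehat{\sigma}_{n,m}$ descend to mutually inverse isomorphisms on cohomology, giving $H\bigl(\gB_{-n}^{\bullet}(\LDQ_m Z),d+u^{-1}\underline H\bigr)\cong H\bigl(\gA^{\rv,\bullet+1}(\LDQ_n\widehat Z),-(\nabla^{\cL^\rv_{\widehat H}}-u\iota_{\widehat K^\rv}+u^{-1}\underline{\widehat H})\bigr)$.

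\textbf{Next,} since an overall sign on a differential changes neither its kernel nor its image, the right-hand side equals $H\bigl(\gA^{\rv,\bullet+1}(\LDQ_n\widehat Z),\nabla^{\cL^\rv_{\widehat H}}-u\iota_{\widehat K^\rv}+u^{-1}\underline{\widehat H}\bigr)$, which by definition is $h^{\rv,\bullet+1}(\LDQ_n\widehat Z,\nabla^{\cL_{\widehat H}^\rv}:\underline{\widehat H})$ --- and for a vanishing statement the degree shift $\bullet\mapsto\bullet+1$ is immaterial. Therefore, if $n\neq 0$, Theorem~\ref{acyc} (applied on the $\widehat Z$ side with $k=n$) forces this to be $\{0\}$, proving the first assertion for every $m\in\ZZ$. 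For the second assertion I would run the same two steps with the dual loop Hori map $\widehat\tau_{n,m}\colon\gB_{-m}^{\bullet}(\LDQ_n\widehat Z)\to\gA^{\rv,\bullet+1}(\LDQ_m Z)$, which by the ``dual results'' clauses of Theorem~\ref{main} is again an isomorphism and a chain map up to an overall sign; this identifies $H\bigl(\gB_{-m}^{\bullet}(\LDQ_n\widehat Z),d+u^{-1}\widehat{\underline H}\bigr)$ with $h^{\rv,\bu}(\LDQ_m Z,\nabla^{\cL_H^\rv}:\underline H)$, which vanishes by Theorem~\ref{acyc} with $k=m\neq 0$.

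\textbf{Main obstacle.} There is essentially none of substance here: the Corollary is bookkeeping on top of Theorems~\ref{acyc} and~\ref{main}. The only points deserving a moment's care are confirming that the sign twist on the target differential in Theorem~\ref{main}(ii) is a genuine overall sign (so it cannot obstruct the cohomology isomorphism), and tracking the degree shift $\bullet\mapsto\bullet+1$ consistently between the two halves of the statement; both are cosmetic since only vanishing is claimed.
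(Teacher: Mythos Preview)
Your proposal is correct and takes essentially the same approach as the paper: the paper's entire ``proof'' is the single phrase ``Combining Theorem~\ref{acyc} and Theorem~\ref{main}'', and you have simply spelled out that combination carefully, including the harmless sign and degree-shift bookkeeping.
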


To summarize, we have the following isomorphisms as chain maps,
\[ 
\begin{tikzcd}
\gB_{-n}^{\bullet}(\LDQ_m Z)\oplus\gA^{\rv, \bullet}(\LDQ_mZ)\ar[rr, shift left]{rr}{\tau_{m, n}\oplus \sigma_{m, n}} & & \gA^{\rv, \bullet+1}(\LDQ_n\widehat Z)\oplus\gB_{-m}^{\bullet+1}(\LDQ_n \widehat Z)
\ar[ll, shift left]{ll}{\widehat \sigma_{n, m}\oplus \widehat \tau_{n, m}}\end{tikzcd}
\] 
such that 
$$\widehat{\sigma}_{n, m}\circ\tau_{m,n}=-\mathrm{Id}, \  \tau_{m,n}\circ \widehat{\sigma}_{n, m}=-\mathrm{Id} $$
and 
$$\sigma_{m, n}\circ\widehat\tau_{n,m}=-\mathrm{Id}, \  \widehat\tau_{n,m}\circ \sigma_{m, n}=-\mathrm{Id}. $$


\subsection{Relation to the exotic Hori formulae} In this subsection, we discuss the relation between the exotic Hori formula introduced in \cite{HM18} and the loop Hori formula. We first briefly review the exotic Hori formulae in this paper. 

Let $\xi, \hat{\xi}$ be the complex line bundle determined by the circle bundles $Z, \hat{Z}$ and the standard representation of the circle on the complex plane. Let $\nabla^\xi$ and $\nabla^{\hat{\xi}}$ be the connections on $\xi, \hat{\xi}$ induced from the connections on $Z, \hat{Z}$ respectively.

\begin{theorem}[\protect \cite{HM18}]\label{main-exotic1} $\forall n\in \mathbb{Z}$, we have: \newline
on $\Omega^*(Z, \pi^*(\hat{\xi}^{\otimes n}))$, the following identity holds,
\be (\pi^*\nabla^{\hat{\xi}^{\otimes n}}-\iota_{nv}+H)^2+nL_v=0; \ee
on $\Omega^*(\hat{Z}, \pi^*(\xi^{\otimes n}))$, the following identity holds,
\be (\hat{\pi}^*\nabla^{\xi^{\otimes n}}-\iota_{n\hat{v}}+\hat{H})^2+nL_{\hat{v}}=0. \ee
\end{theorem}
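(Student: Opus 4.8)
The plan is to prove each identity by a direct local computation using the standard Cartan-calculus identities, exactly paralleling the proof of Proposition \ref{flat} above. I will work with the first identity; the second follows by the evident hat-symmetry (interchanging $Z \leftrightarrow \widehat Z$, $v \leftrightarrow \widehat v$, $A \leftrightarrow \widehat A$, $H \leftrightarrow \widehat H$).

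First I would fix a good cover $\{U_\alpha\}$ of $X$ trivializing both circle bundles, and recall the local description of the connection on $\pi^*(\widehat\xi^{\otimes n})$: under the local basis $\widehat s_\alpha^{\otimes n}$ (pulled back to $Z$), the connection one-form of $\pi^*\nabla^{\widehat\xi^{\otimes n}}$ is $n\,\pi^*\widehat A_\alpha$, where $\widehat A = \widehat A_\alpha + d\widehat\theta_\alpha$ and $\widehat A_\alpha = -\iota_v B_\alpha$. Since the transition functions $\widehat h_{\alpha\beta}^n$ are pulled back from $X$, the Lie derivative $L_v$ is well-defined on $\Omega^*(Z,\pi^*(\widehat\xi^{\otimes n}))$ by acting only on the form part, so the asserted identity is local and it suffices to verify
\be
(d + n\pi^*\widehat A_\alpha - \iota_{nv} + H)^2 + n L_v = 0
\ee
as an operator on ordinary forms on $\pi^{-1}(U_\alpha)$.

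Next I would expand the square. The three relevant inputs are: (a) the Cartan identity $(d - \iota_{nv})^2 + n L_v = (d-\iota_{nv})^2 + L_{nv} = 0$, since $L_{nv}$ is the Lie derivative along the vector field $nv$; (b) $dH = 0$ and $H \wedge H = 0$ (top-degree wedge of a $3$-form with itself vanishes here, or more simply $H\wedge\cdot$ squares to zero since $2\cdot 3$ is even and $H\wedge H = 0$ as $H$ is odd-degree — actually $H\wedge H=0$ because $H$ has odd degree), together with $\iota_{nv}(n\pi^*\widehat A_\alpha)\cdot$ contributing nothing problematic; (c) the cross terms between $n\pi^*\widehat A_\alpha$ and $(d - \iota_{nv} + H)$. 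The crucial cancellation is the analogue of the last displayed line in the proof of Proposition \ref{flat}: namely
\be
-d\iota_{v}B_\alpha \;-\; \iota_{v}H \;=\; d\widehat A_\alpha \;-\; \iota_v H \;=\; 0,
\ee
but here the roles are reversed — I need $-n\,d\pi^*\widehat A_\alpha - n\iota_v H = 0$ on $Z$, i.e. $d\widehat A_\alpha + \iota_v H = 0$. This is exactly the statement that $\widehat F_\alpha = d\widehat A_\alpha = -\iota_v H$ wait — from Section \ref{ReviewT}, $\widehat F_\alpha = \pi_*(H_\alpha)$ and $\widehat A_\alpha = -\iota_v B_\alpha$, so $d\widehat A_\alpha = -d\iota_v B_\alpha = -(L_v - \iota_v d)B_\alpha = \iota_v dB_\alpha = \iota_v H$ (using $L_v B_\alpha = 0$). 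I must track the sign carefully: the cross term from squaring produces $(d + n\pi^*\widehat A_\alpha)$ acting, giving $n\,d\pi^*\widehat A_\alpha$ together with the $H$–$\iota_{nv}$ cross terms $\{H,-\iota_{nv}\} = -n(\iota_v H + H\iota_v)\to$ the wedging part $-n\iota_v H$; combining, the anticommutator contributes $n(d\widehat A_\alpha - \iota_v H)$ on the nose, and by the computation above $d\widehat A_\alpha = \iota_v H$, so this vanishes. Also $\iota_{nv}(n\pi^*\widehat A_\alpha) = n^2\pi^*(\iota_v\widehat A_\alpha) = 0$ since $\widehat A_\alpha$ is basic, and $(n\pi^*\widehat A_\alpha)^2 = 0$ as it is a $1$-form. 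Assembling all pieces gives the identity.

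The main obstacle I expect is purely bookkeeping: getting every sign and every anticommutator right when expanding $(\nabla - \iota + H)^2$, in particular being careful that $-d\iota_{nv} - \iota_{nv}d = -L_{nv}$ (so $(d-\iota_{nv})^2 = -L_{nv}$, which is why $+nL_v$ appears on the left), and that the term $d\widehat A_\alpha = \iota_v H$ is verified with the correct orientation convention for $\pi_*$. There is no deep difficulty — the result is a close cousin of Proposition \ref{flat} and Theorem \ref{flat}, and indeed can be read off from Proposition \ref{flat} by the principle that $\underline{(\cdot)}$ and the averaging formalism restrict, on the winding components, to the finite-dimensional $Z$ with $v$ in place of $K^{\rv}$; but for a self-contained statement in \cite{HM18} I would present the direct local verification as above. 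The second identity is obtained verbatim with hats.
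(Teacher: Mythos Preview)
The paper does not actually give a proof of this theorem; it is quoted verbatim from \cite{HM18} as part of the review in Section~\ref{secloopHori}. Your direct local computation is correct and is precisely the finite-dimensional shadow of the paper's own proof of Proposition~\ref{flat} (the super-flatness on $\LDQ_k Z$): one replaces $K^\rv$ by $nv$, drops the averaging underlines, and the three ingredients $(d-\iota_{nv})^2=-nL_v$, $dH=0$, and $d\widehat A_\alpha=\iota_v H$ combine exactly as there. The only point to make explicit is that the step $d\widehat A_\alpha=-d\iota_vB_\alpha=\iota_v dB_\alpha=\iota_v H$ uses $L_vB_\alpha=0$; this is implicit in the paper's setup (one may average $B_\alpha$ over the $\TT$-action since $H$ is $\TT$-invariant) but deserves to be stated. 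Apart from the somewhat stream-of-consciousness sign-chasing in the middle paragraph, which you should clean up, the argument is complete.
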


Denote
$$\mathcal{A}^*(Z)=\bigoplus_{n\in \ZZ} \mathcal{A}_n^*(Z):=\bigoplus_{n\in \ZZ}\Omega^*(Z, \pi^*(\hat{\xi}^{\otimes n}))^{\TT}[[u, u^{-1}]]. $$ 
$$\mathcal{A}^*(\hat Z)=\bigoplus_{n\in \ZZ} \mathcal{A}_n^*(\hat Z):=\bigoplus_{n\in \ZZ}\Omega^*(\hat Z, \hat \pi^*(\xi)^{\otimes n}))^{\hat \TT}[[u, u^{-1}]]. $$
For each $n\in \ZZ$, consider the complexes
$$(\mathcal{A}_n^*(Z), \pi^*\nabla^{\hat{\xi}^{\otimes n}}-u\iota_{nv}+u^{-1}H),$$
$$ (\mathcal{A}_n^*(\hat Z), \hat{\pi}^*\nabla^{\xi^{\otimes n}}-u\iota_{n\hat{v}}+u^{-1}\hat{H}).$$

Denote
\be \Omega^{\bullet}_{-n}(Z)[[u, u^{-1}]]=\{\omega \in \Omega^{\bullet}(Z)[[u, u^{-1}]]| L_{v}\omega=-n\omega\}.\ee
Note that $\Omega^{\bullet}_{-n}(Z)[[u, u^{-1}]]=\Omega^{\bullet}(Z)^\TT[[u, u^{-1}]]$, i.e. the $\TT$-invariant forms on $Z$. 

Let $\mu\in \Omega^{\bullet}_{-n}(Z)[[u, u^{-1}]].$ Define the {\bf exotic Hori formula} by
\be\label{exoticHori} \tau_n(\mu)= \int^{\TT, n}\mu\, e^{-u^{-1}A\wedge\hat{A}}\in \Omega^{\bullet+1}(\hat Z, \hat\pi^*(\xi)^{\otimes n}))[[u, u^{-1}]],\ee
where $\int^{\TT, n}$ stands for $\int^{(Z\times_X \hat{Z})/\hat{Z}, n}$ for simplicity.

Denote by $\rho$ the tautological global section of the line bundle $(\hat\pi\circ \hat p)^*\xi$ on $Z\times_X \hat{Z}$. Let $\hat \nu\in \Omega^{\bullet}(\hat Z, \hat\pi^*(\xi)^{\otimes n})^{\hat \TT}[[u, u^{-1}]]$. Define the {\bf inverse exotic Hori formula} by
\be \label{ivexoticHori}\hat{\sigma}_n(\hat \nu)= \int^{\hat\TT}\hat p^*(\hat \theta) \cdot (\rho^{-1})^{\otimes n} \cdot e^{u^{-1}A\wedge\hat{A}}\in \Omega^{\bullet+1}(Z)[[u, u^{-1}]]. \ee

One can dually define the $\widehat \tau_n$ from $\widehat Z$ to $Z$ and the $\sigma_n$ from $Z$ to $\widehat Z$.

\begin{theorem}[\protect \cite{HM18}] \label{main-exotic2}
(i) $\tau_n(\mu)\in  \Omega^{\bullet+1}(\hat{Z}, \hat{\pi}^*(\xi^{\otimes n}))^{\hat \TT}[[u, u^{-1}]]$ and $\hat{\sigma}_n(\nu)\in \Omega^{\bullet+1}_{-n}(Z)[[u, u^{-1}]].$ $\tau_n$ induces an isomorphism  
$$\tau_n: \Omega^*_{-n}(Z)[[u, u^{-1}]]\to \Omega^*(\hat{Z}, \hat{\pi}^*(\xi^{\otimes n}))^{\hat \TT}[[u, u^{-1}]]$$
and $\hat \sigma_n=-\tau_n^{-1}$.The dual results for $\hat\tau_n$ and $\sigma_n$ are also true.\newline
(ii) The map $\tau_n$ induces a chain map on the complexes 
$$(\Omega^{\bullet}_{-n}(Z)[[u, u^{-1}]], d+u^{-1}H)\to (\Omega^{\bullet+1}(\hat{Z}, \hat{\pi}^*(\xi^{\otimes n}))^{\hat \TT}[[u, u^{-1}]],-(\hat{\pi}^*\nabla^{\xi^{\otimes n}}-u\iota_{n\hat{v}}+u^{-1}\hat{H}))$$ and the map $\hat \sigma_n$ induces induces a chain map on the complexes 
$$(\Omega^{\bullet}(\hat{Z})[[u, u^{-1}]], \hat{\pi}^*(\xi^{\otimes n})),\hat{\pi}^*\nabla^{\xi^{\otimes n}}-u\iota_{n\hat{v}}+u^{-1}\hat{H})^{\hat \TT}\to(\Omega^{\bullet+1}_{-n}(Z), -(d+u^{-1}H)).$$ The dual results for $\hat\tau_n$ and $\sigma_n$ are also true. 
\end{theorem}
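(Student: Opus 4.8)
The statement is Theorem~\ref{main-exotic2}, reproduced from \cite{HM18}; within the present paper it may simply be cited. Should a self-contained argument be wanted, the cleanest route is to note that it is the finite-dimensional (constant-loop) shadow of Theorem~\ref{main}, and to run the same proof with $\LDQ X$ replaced by $X$, $\LDQ_k Z$ by $Z$, the averaged forms $\underline A,\underline{\widehat A},\underline H,\underline{\widehat H}$ by $A,\hat A,H,\hat H$, and the vertical rotation field $K^\rv$ on $\LDQ_n Z$ (which equals $n\bfv$) by $nv$. I sketch this.

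For part~(i) I would argue locally over a good cover $\{U_\alpha\}$ of $X$, using $\pi^{-1}(U_\alpha)\cong U_\alpha\times\TT$ and $\hat\pi^{-1}(U_\alpha)\cong U_\alpha\times\hat\TT$ with vertical coordinates $\lambda_\alpha,\hat\lambda_\alpha$ and local bases $\{s_\alpha\},\{\hat s_\alpha\}$ of $\xi,\hat\xi$. A form $\mu$ with $L_v\mu=-n\mu$ is locally $(\mu_{\alpha,0}+\mu_{\alpha,1}A)\,e^{-2\pi in\lambda_\alpha}$ with $\mu_{\alpha,0},\mu_{\alpha,1}$ forms on $U_\alpha$; feeding this into \eqref{exoticHori}, the factor $e^{2\pi in\lambda_\alpha}$ supplied by $\int^{\TT,n}$ cancels $e^{-2\pi in\lambda_\alpha}$, the $\TT$-integration keeps the $A$-component, and one reads off an explicit local expression for $\tau_n(\mu)$ in terms of $\mu_{\alpha,0},\mu_{\alpha,1},\hat A$ and $\hat s_\alpha^{\otimes n}$ that is manifestly $\hat\TT$-invariant and valued in $\hat\pi^*\xi^{\otimes n}$. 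The same computation for \eqref{ivexoticHori} — where the tautological section $\rho$ supplies the weight factor $e^{\pm2\pi in\lambda_\alpha}$ — shows $\hat\sigma_n(\hat\nu)$ satisfies $L_v\hat\sigma_n(\hat\nu)=-n\,\hat\sigma_n(\hat\nu)$, and composing the two local formulae gives $\hat\sigma_n\circ\tau_n=-\mathrm{Id}$ and $\tau_n\circ\hat\sigma_n=-\mathrm{Id}$ directly; hence both maps are isomorphisms and $\hat\sigma_n=-\tau_n^{-1}$. The dual statements for $\hat\tau_n,\sigma_n$ follow by interchanging $(Z,A,H,v)\leftrightarrow(\hat Z,\hat A,\hat H,\hat v)$.

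For part~(ii) I would apply Theorem~\ref{interchange} to $P=Z\times_X\hat Z\to\hat Z$, with connection one-form the (restriction of the) pullback of $A$, twisting integer $n$, and the vector field $n\hat v$ on $\hat Z$ lifted naturally to $P$. The two algebraic inputs are: first, $\bigl[(d+u^{-1}H)\mu\bigr]e^{-u^{-1}A\wedge\hat A}=(d+u^{-1}\hat H)\bigl(\mu\,e^{-u^{-1}A\wedge\hat A}\bigr)$, which is immediate from $\hat H=H+d(A\wedge\hat A)$ on the correspondence space (cf.\ \eqref{HhatH}); and second, $(nA-u\iota_{n\hat v})\bigl(\mu\,e^{-u^{-1}A\wedge\hat A}\bigr)=0$, which follows from $\iota_{\hat v}\mu=0$ (as $\mu$ is pulled back from $Z$) together with $\iota_{\hat v}(A\wedge\hat A)=-A$. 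Feeding these into Theorem~\ref{interchange} converts $\tau_n\bigl((d+u^{-1}H)\mu\bigr)$ into $-\bigl(\hat\pi^*\nabla^{\xi^{\otimes n}}-u\iota_{n\hat v}+u^{-1}\hat H\bigr)\tau_n(\mu)$, so $\tau_n$ is a chain map up to the overall sign; since $\hat\sigma_n=-\tau_n^{-1}$ it is a chain map as well, and the dual statements follow as before.

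I expect the genuine difficulty to be bookkeeping rather than a real obstacle, and the contrast with the loop case is instructive: here $A\wedge\hat A$ already restricts correctly to the correspondence space and obeys the clean contraction identity $\iota_{\hat v}(A\wedge\hat A)=-A$, so the auxiliary form $\kappa(\underline A,\underline{\widehat A})$ needed in the proof of Theorem~\ref{main} never appears. What is left is careful sign-tracking and, in part~(ii), matching the three outputs of Theorem~\ref{interchange} — the covariant derivative $\hat\pi^*\nabla^{\xi^{\otimes n}}$, the contraction $\iota_{n\hat v}$, and the twist $u^{-1}\hat H$ — against the three pieces of the exotic superconnection; the hypothesis $L_v\mu=-n\mu$ is precisely what makes $\tau_n(\mu)$ land in the $\hat\TT$-invariant forms.
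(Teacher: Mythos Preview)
Your reading is correct: the paper does not prove Theorem~\ref{main-exotic2} at all but merely quotes it from \cite{HM18}, so there is no in-paper argument to compare against. Your proposed self-contained sketch is exactly the finite-dimensional specialization of the proof of Theorem~\ref{main} (with the simplification you note, that $\kappa$ is unnecessary since $A\wedge\hat A$ already behaves well), and this is indeed the argument given in \cite{HM18}.
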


To compare the loop Hori formulae and the exotic Hori fomulae, we have the following commutative diagram,
\be
\xymatrix@=3.5pc{
 \cL(0, n)&& \LDQ_n\widehat Z\\
    &\LDQ_0 Z&& \LDQ X\\
    Z\times_X \widehat Z&&\widehat Z\\
    &Z&&X\\
    \ar"1,1";"2,2"_{p_0}
    \ar"1,3";"2,4"_{L\widehat\pi}
    \ar"3,1";"4,2"_{p}
    \ar"3,3";"4,4"^{\widehat \pi}
    \ar"1,1";"1,3"^{\widehat p_n}
    \ar"2,2";"2,4"_{L\pi\ \ \ \ \ \ \ }
    \ar"4,2";"4,4"_{\pi}
    \ar"3,1";"1,1"
    \ar"4,2";"2,2"
    \ar"4,4";"2,4"
   \ar"3,1";"3,3"^{\ \ \ \ \ \ \ \ \ \ \ \  \widehat p}|\hole\ \ \ \ \ \ \ \ \ 
    \ar"3,3";"1,3"|\hole
}
\ee
where the vertical maps $i_{0,Z}: Z\to \LDQ_0Z$ and $i_X: X\to \LDQ X$ are embeddings of the constant loops in the loop spaces, the map on the hat side is
$$\iota_{n, \widehat Z}: \widehat Z\to \LDQ_n\widehat Z, \ \ \ x\to \gamma_x\ \ \mathrm{such\ that} \ \gamma_x(t)=t^n\cdot x$$
and the map $Z\times_X \widehat Z \to \cL(0, m)$ is the induced map on the correspondence spaces. 
Then it is not hard to see that $\iota_{n, \widehat Z}^*\cL^\rv_{\widehat H}\cong \hat\pi^*(\xi^{\otimes n})$ and $\iota_{n, \widehat Z}^* \nabla^{\cL^\rv_{\widehat H}}=\hat{\pi}^*\nabla^{\xi^{\otimes n}}.$ Comparing (\ref{loopHori}) with (\ref{exoticHori}), and (\ref{ivloopHori}) with (\ref{ivexoticHori}), we can see that for $\omega\in \gB_{-n}^{\bullet}(\LDQ_0 Z), \widehat \theta\in \gA^{\rv, \bullet}(\LDQ_n\widehat Z)$,
\be \tau_n\circ i_{0, Z}^*(\omega)=\iota_{n, \widehat Z}^*\circ \tau_{0, n}(\omega), \ \ \ \ \widehat \sigma_n\circ\iota_{n, \widehat Z}^*(\widehat \theta)=i_{0, Z}^*\circ \widehat\sigma_{n,0}^*(\widehat \theta).\ee
This shows that the exotic Hori formulae are indeed the shadows of the loop Hori formulae. 


\end{document}